\theoremstyle{plain}
\newtheorem{theorem}{Theorem}[section]
\newtheorem{corollary}[theorem]{Corollary}
\newtheorem{proposition}[theorem]{Proposition}
\newtheorem{lemma}[theorem]{Lemma}
\newtheorem{question}[theorem]{Question}
\newtheorem{remark}[theorem]{Remark}
\newcommand{\be}{\begin{equation}}
\newcommand{\ene}{\end{equation}}
\newcommand{\br}{\begin{remark}}
\newcommand{\er}{\end{remark}}
\newcommand{\bl}{\begin{lem}}
\newcommand{\el}{\end{lem}}
\newcommand{\bcor}{\begin{cor}}
\newcommand{\ecor}{\end{cor}}
\newcommand{\bpro}{\begin{pro}}
\newcommand{\epro}{\end{pro}}
\newcommand{\ben}{\begin{enumerate}}
\newcommand{\een}{\end{enumerate}}
\newcommand{\bp}{\begin{proof}}
\newcommand{\ep}{\end{proof}}
\newcommand{\bpo}{\begin{pro}}
\newcommand{\epo}{\end{pro}}
\newcommand{\beq}{\begin{equation*}}
\newcommand{\eeq}{\end{equation*}}
\newcommand{\bear}{\begin{eqnarray}}
\newcommand{\eear}{\end{eqnarray}}
\newcommand{\beqar}{\begin{eqnarray*}}
\newcommand{\eeqar}{\end{eqnarray*}}
\newcommand{\bt}{\begin{theorem}}
\newcommand{\et}{\end{theorem}}
\newcommand{\bex}{\begin{excer}}
\newcommand{\eex}{\end{excer}}
\theoremstyle{definition}
\newtheorem{definition}[theorem]{Definition}
\theoremstyle{remark}
\newtheorem*{con*}{Construction}
\newtheorem*{rem*}{Remark}
\newtheorem*{exam*}{Example}
\newtheorem*{exams*}{Examples}
\newtheorem*{thm*}{\bf Theorem}
\newtheorem*{que*}{Question}
\newtheorem*{Def*}{Definition}
\newtheorem*{Cons*}{Construction}
\newtheorem*{Lem*}{Lemma}
\newtheorem*{Conj*}{\bf Conjecture}
\numberwithin{equation}{section} \numberwithin{figure}{section}
\begin{document}

\title{The Spectrum Zero Problem of nonlinear Dirac equation with particle-antiparticle interaction}
 
\author{ Qi Guo, Yuanyuan Ke*, Bernhard Ruf } 

\maketitle

\noindent{ \bf Abstract  }:
{\small 
In this study, we investigate the {\it Spectrum Zero Problem} of nonlinear Dirac equations with a focus on the behavior of zero at the boundaries of the spectral gap. We introduce a nonlinear particle-antiparticle interaction and demonstrate that the problem exhibits asymmetric behavior at the left and right boundaries of the spectrum. Specifically, when zero is at the right boundary, the problem has only trivial solutions and is identified as a bifurcation point on the left, whereas nontrivial solutions exist when zero is at the left boundary or within the spectral gap. The main idea is to employ a variational method involving a perturbation technique that places zero within the spectral gap. We use the critical point theorem of the perturbed functional to construct a Palais-Smale sequence in order to approach the critical point of the target energy functional. Additionally, we utilize the concentration-compactness principle to identify critical points of the original functional and explore the associated bifurcation phenomena. Our results reveal an asymmetric phenomenon in nonlinear quantum systems and provide insights into why strongly indefinite problems typically address zero only at the left boundary of the spectral gap.
 }

\noindent {\bf Keywords}:  {\small   Nonlinear Dirac equations, Spectrum zero problem, Solitary waves.}

\noindent {\bf AMS} Subject Classification: \small 35A15, 35M30, 35Q60, 47J10, 81Q10

 \medskip
\noindent{\small *Corresponding Author: keyy@ruc.edu.cn}
\tableofcontents

\section{Introduction and main results}
The {\it Spectrum Zero Problem} initially proposed by Ambrosetti \cite{DonDinGuo23} refers to the variational problems when zero is in the essential spectrum of a linear operator in the working space. This problem is divided into two cases: zero belonging to the interior of the essential spectrum and zero belonging to the boundary. This paper mainly focuses on the second case of the {\it Spectrum Zero Problem} which is also called the spectrum point zero problem in some references, i.e. reaction-diffusion equations \cite{ChenTang22, WeiYang14}, nonlinear Schr\"odinger equations \cite{BartschDing99, CAM19, Mederski15, Schechter15, WillemZou}, and Hamiltonian systems \cite{SunChenChu}. The linear operators in these problems have infinite-dimensional negative spaces which are known as the strongly indefinite cases. Most existing results focus on operators with spectra bounded below. However, standard methods like spectral projection approximation \cite{BartschDing99, SunChenChu, WeiYang14}, the Nehari–Pankov manifold method \cite{ChenTang22, Mederski15}, and the modified weak linking theorem \cite{Schechter15, WillemZou} are inapplicable here due to the spectrum of the Dirac operator being unbounded both above and below. Recently, the first author and collaborators \cite{DonDinGuo23} tackled this issue by constructing a new proper workspace and perturbing the functional. Previous studies, which dealt with zero at the left boundary of the spectrum, yielded solutions with low regularity ($H^1_{loc}$ for Dirac, $H^2_{loc}$  for Schr\"odinger). The Pohozaev identity indicates that  only trivial strong solutions ($H^1$ for Dirac, $H^2$ for Schr\"odinger) exist when zero is at the right boundary. This discrepancy highlights differences between zero at the left and right boundaries of the spectrum, leading to the following question:

\begin{question}\label{question0}
   In the {\it Spectrum Zero Problem}, is there an essential difference between the two cases that zero at the left  or the right boundaries of the spectrum?
\end{question}

This paper aims to answer this question. We will show that for nonlinear Dirac equations with particle-antiparticle interaction, nontrivial strong solutions exist when zero is at the left boundary of the  spectrum, while only trivial strong solutions exist when zero is at the right boundary.

\subsection{Physical Motivation}

In this part, we review the origin of the nonlinear terms in the Dirac equation, which have been studied for various purposes 
\cite{MR1897689, WOS:A1988P347300005,WOS:000267267900023,WOS:000276973700004}.
It is worth mentioning that in Schr\"odinger equations, the nonlinearities occur as an approximation in optics and condensed matter.
At the quantum mechanical level, it is hoped to detect quantum nonlinearities at high energy or at very short distances.
Indeed, neutrino oscillations may be related to quantum nonlinearities \cite{WOS:000276973700004}.
Nonlinear versions of Dirac equations have been used as effective theories in atomic, nuclear, and gravitational physics, see \cite{Finkelstein1}.
One of its general forms is
\begin{align*}\label{nonlinear}
i  c\hbar \gamma^\mu\partial_\mu\psi-mc^2  \psi- G_{\bar{\psi}}+\partial_\mu ( G_{\partial_\mu \bar{\psi}})=0,
\end{align*}
where $\psi:\mathbb{R}^3\rightarrow \mathbb{C}^4$ represents the wave function of the state of Dirac particles, such as electron, $\bar{\psi}$ is the Dirac adjoint of $\psi$, $m>0$ is the mass of the particle, $c$ is the speed of light,
$\hbar$ is the Planck constant, $G$ is a real-valued function of the wave function $\psi$, its adjoint and their derivatives, $G_{\bar{\psi}}$ is the differential of $G$ with respect to the column vector $\bar{\psi}$.
If $G$ satisfies
\[
\overline{G_\psi}-\partial_\mu (\overline{G_{\partial_\mu\psi}})=G_{\bar{\psi}}-\partial_\mu ( G_{\partial_\mu \bar{\psi}}),
\]
then the nonlinear Dirac equation can be obtained from the action with the Lagrangian density given by
\[\mathcal{L}=ic\hbar \bar{\psi} \gamma^\mu \partial_\mu \psi-mc^2\bar{\psi}\psi- G(\psi,\bar{\psi},\partial_\mu \psi,\partial_\mu\bar{\psi}).\]
 To construct quantum nonlinearities of Dirac equations, one needs to consider the basic properties of linear theory, such as locality, Poincar\'e invariance, hermiticity, current conservation, universality, separability, and discrete symmetries.
By demanding additional conditions on $G$, some of these properties can be preserved, see \cite{ WOS:000267267900023}.
 We study solitary wave solutions in this paper, which are solutions of the form
\[\psi(t,x)=e^{-i\omega t }u(x),\]
where $\omega$ is called the frequency of the wave function. Solitons are known to exist in many systems based on the Dirac equations, see \cite{BC}.
The frequency $\omega$ represents a parameter describing the time evolution of the spinor field $\psi$.
Note that in the linear theory, the frequency $\omega$ equals the energy of the field. In order to simplify the model, we set $c=\hbar=1$ and use a function $F:\mathbb{C}^4\rightarrow \mathbb{R}$ instead of $G$, where $F$ satisfies $$F_u(u):=\nabla F(u)=\gamma^0 \left(G_{\bar{u}}-\partial_\mu ( G_{\partial_\mu \bar{u}})\right)(u,\bar{u},\partial_\mu u,\partial_\mu \bar{u}).$$
Then $u(x)$ solves the following stationary nonlinear Dirac equation
\begin{align}\label{eq1.1}\tag{NDE}
-i\alpha\cdot \nabla u+m\beta u-F_u(u)=\omega u \ ,
\end{align}
where  $\alpha$ and $\beta$ denote the Dirac matrices, see section \ref{sec2.1} below.

\subsection{Mathematical Aspects of Nonlinear Models}
The Spectrum Zero Problem concerns when frequency $\omega$ allows solutions in nonlinear Dirac equations \eqref{eq1.1} and when it does not. 
There are many results for the nonlinear Dirac equations with different frequencies and assumptions on nonlinearities.
 For the case of the Soler-type nonlinearity $F( u)=\frac{1}{2}H(\bar{u} u)$, with $\bar{u}u=\langle\beta u,u\rangle$. Comech, Guan, Gustafson found one positive and one negative eigenvalue present in the spectrum of the linearizations at small amplitude solitary waves in the limit $\omega\rightarrow m$, which implies that these solitary waves are linearly unstable \cite{CGG}. For the case $\omega\in (m-\varepsilon, m)$ for some $\varepsilon$ small enough, Boussa\"id and Comech proved the absence of eigenvalues with positive real part, which implies the spectral stability of small amplitude soliton waves \cite{BC2}.
There are also many results on this problem in \cite{MR968485, MR2434346,  MR949625} with $\omega\in (0,m)$ by searching for solutions that are separable in spherical coordinates.
%\begin{align*}
% u(x)=\left(f(r),0,ig(r)\cos \theta, ig(r)\sin \theta \exp(i\phi) \right)^T, \quad x\in \mathbb{R}^3.
%\end{align*}
The authors use the shooting method to show existence and multiplicity results of $f$ and $g$.
It is worth mentioning that in \cite{MR456046}, V\'{a}zquez found there are no localized solutions of the Soler-type with $|\omega|>m$.
And in \cite{MR1071235}, Balabane, Cazenave and V\'{a}zquez showed the existence of stationary states with compact support for Dirac fields with singular nonlinearities and $\omega>m$.
In \cite{MR3070757}, Treust studied nodal solutions for Dirac equations with singular nonlinearities.
Variational methods were applied to search for these solutions by Esteban and S\'er\'e when $\omega\in (0,m)$, see \cite{ MR2434346,Esteban-Sere1995CMP, MR1897689}.
For the non-autonomous case, Ding and Ruf studied the case that a linear potential is of Coulomb-type or special scalar potentials, and nonlinearities have the form $R_u(x,u)$ in \cite{Ding-Ruf2012SIAM}.

%Nonlinear phenomena are ubiquitous in nature, stemming from the properties of the media in which they occur. Recently, Baker et al. in \cite{baker} reported on the use of a laser beam to slow down antihydrogen atoms, which could potentially enable some fundamental symmetries of the universe to be probed. This has led to the introduction of a new class of nonlinear terms that differentiate between particles and their antiparticles. These new nonlinear terms can be used to improve the accuracy and precision of the model, and to better capture complex data structures. Furthermore, they can also help improve the model's generalization ability. In order to introduce these nonlinear terms, we recall that the free Dirac operator denoted by

Nonlinear phenomena are becoming increasingly more prevalent in various contexts, from the properties of different media to the groundbreaking research of Baker et al. in \cite{baker}.
As they demonstrated, the use of a laser beam can be used to slow down antihydrogen atoms, which can have a significant impact on understanding the fundamental symmetries.
To take advantage of this, nonlinear terms must be introduced to the model that can differentiate between particles and their antiparticles.
These nonlinear terms are useful for improving the accuracy, precision, and generalization ability of the model.
To do so, we must first recall the free Dirac operator, denoted by $D=-i \alpha\cdot \nabla+m \beta$, which is self-adjoint on $L^2(\mathbb{R}^3,\mathbb{C}^4)$ with domain $H^1(\mathbb{R}^3,\mathbb{C}^4)$ and formal domain $H^{1/2}(\mathbb{R}^3,\mathbb{C}^4)$.
Based on the spectrum of $D$, we have that $L^2(\mathbb{R}^3,\mathbb{C}^4)$ possesses the following orthogonal decomposition:
\[L^2=L^+\oplus L^-,\]
with $D$ being positive (or negative) definite on $L^+$ (or $L^-$).
Let $E$ be the completion of $\mathscr{D}(|D|^{1/2})$ under the following real inner product
\[ ( u,v) :=\Re (|D|^{1/2}u,|D|^{1/2}v)_{L^2},\]
where $(\cdot, \cdot)_{L^2}$ is the complex inner product in $L^2(\mathbb{R}^3,\mathbb{C}^4)$. The induced norm is denoted by $\|\cdot\|$. Then $E$ possesses the following decomposition
\[E=E^+\oplus E^-,\]
where $E^\pm=E\cap L^\pm$. The orthogonal projection operators onto the subspaces are given by
\[P^\pm=\frac{1}{2}\left(I\pm  |D|^{-1}D\right).\]
Here the absolute operator $|D|$ is given as $|D|=\sqrt{-\Delta+m^2} I$. We denote $u^\pm=P^\pm u$.
In the standard model, a state $u\in E$ is a superposition of particles and its antiparticles, $u^+\in E^+$ describes the state of Dirac fermions with positive energy, and $u^-\in E^-$ describes its antiparticles with negative energy, which can cancel part of the energy of $u^+$.

\subsection{Main Results}
Consider a class of nonlinear terms, denoted as $F$, that differentiate between Dirac particles and their antiparticles.
This allows us to better understand the behavior of these particles and to explore the underlying physics at work. Hence we introduce a function $\hat{F}:\mathbb{C}^4\oplus \mathbb{C}^4\rightarrow \mathbb{R}$ and set $F(u)=\hat{F}(u^+,u^-)$.
Next, we give the basic assumptions on $\hat{F}$, which means our results can only be applied to these determined nonlinear models. In the following, $p > 0$ is a given real number.

\begin{itemize}
\item[$(F_1)$]  $\hat{F}\in \mathcal{C}^1\left(\mathbb{C}^4\times \mathbb{C}^4,\mathbb{R}\right)$.
\item[$(F_2)$]  There is $a_1, a_2>0$, such that for any $s,t\in \mathbb{C}^4$, we have $$a_1 \left(|s|^p+|t|^2\right)\leq \hat{F}(s,t)\leq a_2 (|s|^p+|t|^p+|t|^2).$$
\item[$(F_3)$]  There is $b_1, b_2>0$, such that for any $s,t\in \mathbb{C}^4$, we have
\[2\hat{F}(s,t) +b_1 |s|^p \leq \langle \partial_s \hat{F} , s\rangle +\langle\partial_t \hat{F} , t\rangle\leq 3\hat{F}(s,t)-b_2\left(|s|^p+|t|^2\right).\]

\item[$(F_4)$] There is $ c_1>0$, $d_2\geq d_1>0$, such that for any $s,t\in \mathbb{C}^4$, we have
\[\langle \partial_s \hat{F} ,s\rangle\leq c_1 |s|^p+d_1 |t|^2 ,\quad \langle \partial_t \hat{F} ,t\rangle\geq d_2\left(|t|^2-|s|^p\right).\]
\end{itemize}

Here, $\langle \partial_s \hat{F} ,w\rangle$ is the standard derivative of $\hat{F}$, that is
$$\langle \partial_s \hat{F} ,w\rangle:=\lim\limits_{h\rightarrow 0} \frac{\hat{F}(s+hw,t)-\hat{F}(s,t)}{h},\ \langle \partial_t \hat{F} ,w\rangle:=\lim\limits_{h\rightarrow 0} \frac{\hat{F}(s ,t+hw)-\hat{F}(s,t)}{h}.$$
Here are some examples to which these assumptions apply.
\begin{exams*}
%\begin{itemize}
 Set $\hat{F}(s,t)= |s|^p+ g(|t|)|t|^2$, with $2 < p < 3$, where
\begin{align*}
g(h)=\begin{cases}
\frac{3}{2}\left(1+\ln h\right)+b_2, &\text{if} \quad h\geq 1,\\
\frac{1}{2}\left(1+h+h^2\right)+b_2,& \text{if}\quad 0\leq h\leq 1.
\end{cases}
\end{align*}
In general, if $g:\mathbb{R}_{\geq 0}\rightarrow \mathbb{R}_{\geq 0}$ satisfies
\[0\leq g'(h)\leq \frac{g(h)}{h}-\frac{b_2}{h}\leq \frac{a_2}{h^{3-p}}+\frac{a_2-b_2}{h},\]
then $\hat{F}(s,t)=|s|^p+g(|t|)|t|^2$ satisfies assumptions $(F_1)$-$(F_4)$.
%\item[(ii)] $\hat{F}(s,t)=|s+t|^p+g(|t|)$, where
%\begin{align*}g(|t|)=
%\begin{cases}
%|t|^2, &\text{if} \quad t\notin [1,3],\\
%2|t|^2\sin^2 \frac{\pi |t|}{4},&\text{if} \quad   t\in [1,3].
%\end{cases}
%\end{align*}
%\end{itemize}
 \end{exams*}

Now, we are ready to state our first main theorem concerning existence and nonexistence of solitary wave solutions with different frequencies of \eqref{eq1.1}. 
\begin{theorem}\label{mainthm}
Let $(F_1)-(F_4)$ be satisfied with $p\in (2,3)$. 
\begin{itemize}
\item[(1)] If $\omega \in [m ,\infty)$, then the nonlinear Dirac equation \eqref{eq1.1} possesses only trivial solution $u\equiv 0$ in $H^1(\mathbb{R}^3,\mathbb{C}^4)$.
\item[(2)] If $\omega\in [-m ,m )$, then the nonlinear Dirac equation \eqref{eq1.1} possesses at least one nontrivial solution $u\not\equiv 0$ in $H^1(\mathbb{R}^3,\mathbb{C}^4)$.
\end{itemize}
\end{theorem}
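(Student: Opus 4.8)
The plan is to prove the two parts by genuinely different mechanisms, which is what produces the asserted left/right asymmetry. Throughout I write the equation as $(D-\omega)u=F_u(u)$ with $F(u)=\hat F(u^+,u^-)$, and I record the chain rule $F_u(u)=P^+\partial_s\hat F(u^+,u^-)+P^-\partial_t\hat F(u^+,u^-)$, using that the $P^\pm$ are self-adjoint projections.

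For the nonexistence statement (1), I would extract two families of integral identities from a strong $H^1$ solution. Pairing the equation separately with $u^+$ and $u^-$, and using $D\ge m$ on $L^+$, $D\le -m$ on $L^-$ together with $(F_4)$, gives
\[(m-\omega)\|u^+\|_{L^2}^2\le\int\langle\partial_s\hat F,u^+\rangle,\qquad \int\langle\partial_t\hat F,u^-\rangle\le-(m+\omega)\|u^-\|_{L^2}^2 .\]
The second family is a Pohozaev (virial) identity obtained by testing with the dilation field $x\cdot\nabla u$; here the space dimension enters, and the coefficient $3$ in $(F_3)$ is tuned exactly to the scaling weight $\int x\cdot\nabla\hat F=-3\int\hat F$. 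I would then form the combination that, via the upper bound in $(F_3)$, replaces $\int(\langle\partial_s\hat F,u^+\rangle+\langle\partial_t\hat F,u^-\rangle)$ by $3\int\hat F$ modulo the definite-sign remainder $-b_2\int(|u^+|^p+|u^-|^2)$. For $\omega\ge m$ the frequency contribution carries the unfavorable sign $m-\omega\le0$, and the scheme is arranged so that this combines with that remainder to make every term of one sign; the identity can then hold only if $\int(|u^+|^p+|u^-|^2)=0$, that is $u\equiv0$.

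The main obstacle in (1) is that the Pohozaev identity is truly nonlocal. When testing the nonlinear term $\langle F_u(u),x\cdot\nabla u\rangle$ one cannot simply move $x\cdot\nabla$ onto $\hat F$, because the spectral projections do not commute with dilations once $m>0$: the symbol of $|D|^{-1}D$ is not homogeneous of degree zero. I would control this by computing the commutator $[P^\pm,x\cdot\nabla]=\pm\tfrac12[\,|D|^{-1}D,\,x\cdot\nabla\,]$ through symbol calculus; its symbol is proportional to $m$ and decays like $|\xi|^{-1}$ at high frequency, so the correction terms are smoothing and can be absorbed in $E=\mathscr{D}(|D|^{1/2})$ using the one-sided bounds of $(F_4)$. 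Showing that these corrections do not spoil the sign of the combined identity is the crux of part (1).

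For the existence statement (2) I would work with the strongly indefinite functional $\Phi_\omega(u)=\tfrac12\|u^+\|^2-\tfrac12\|u^-\|^2-\tfrac\omega2\|u\|_{L^2}^2-\int\hat F(u^+,u^-)$ on $E=E^+\oplus E^-$. For $\omega\in(-m,m)$ zero lies in the spectral gap, the quadratic part is nondegenerate, and the superquadratic lower bound in $(F_2)$ (recall $p>2$) yields the linking geometry for strongly indefinite functionals, while $(F_3)$ serves as an Ambrosetti--Rabinowitz-type condition giving bounded Palais--Smale sequences. For the boundary value $\omega=-m$ the form degenerates on $L^-$, so, following the strategy announced in the introduction, I would perturb to $\omega=-m+\varepsilon$ to push zero into the gap, obtain critical points $u_\varepsilon$ of the perturbed functional by the linking theorem, derive uniform-in-$\varepsilon$ bounds, and finally let $\varepsilon\to0$. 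The hard part is compactness: on $\mathbb{R}^3$ the problem is translation invariant, so a bounded Palais--Smale sequence may converge weakly to zero. I would rule this out with the concentration--compactness principle — the lower bound $\hat F\ge a_1(|s|^p+|t|^2)$ of $(F_2)$ prevents vanishing — and, after translating, extract a nontrivial weak limit solving the original equation. Finally, the reason this succeeds at $\omega=-m$ but must fail at $\omega=m$ (consistently with part (1)) is the asymmetry of $(F_4)$: at the left boundary zero touches the spectrum on $L^-$, precisely the sector where $\hat F$ is quadratic in $t$ and can compensate the degeneracy of $D-\omega$, whereas at the right boundary zero touches $L^+$, where $\hat F$ is only superlinear and the Pohozaev obstruction of part (1) takes over. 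I expect establishing non-vanishing together with the uniform bounds at the degenerate boundary to be where the real work lies.
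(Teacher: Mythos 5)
Your proposal follows the paper's own architecture in both parts: for (1), a Pohozaev identity combined with the identity obtained by testing the equation with $u$ and the upper bound in $(F_3)$, forcing $b_2\int(|u^+|^p+|u^-|^2)\,dx\le 0$; for (2), linking plus Lions' concentration--compactness and $\mathbb{Z}^3$-translations when $\omega\in(-m,m)$, and at $\omega=-m$ the perturbation to frequencies inside the gap (the paper's $\tilde\Phi_\lambda$ with $\lambda_n=1-1/n$ is exactly your $\omega=-m+\varepsilon$), uniform bounds extracted from $(F_4)$ by testing with $u_n^\pm$, and a passage to the limit along the resulting $(PS)_c$-sequence. Your account of part (2), including the explanation that $(F_4)$ compensates the degeneracy only on the $E^-$ side, matches the paper's proof at the level of detail you give.

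The genuine divergence is in part (1), and there your plan has a gap at precisely the point you flag as the crux. You are right that the delicate issue is the nonlocality of $F_u(u)=P^+\partial_s\hat F(u^+,u^-)+P^-\partial_t\hat F(u^+,u^-)$; for what it is worth, the paper's own proof never confronts it: it applies the pointwise chain rule $\Re\langle F_u(u),x\cdot\nabla u\rangle=x\cdot\nabla F(u)$ and gets the sign of the frequency term from the diagonal form of $\beta$ (namely $m\langle\beta u,u\rangle-\omega|u|^2\le 0$ pointwise when $\omega\ge m$), exactly as if the nonlinearity were local. Your proposed repair, however, does not close the hole. The correction produced by the commutators is $\int\Re\langle\partial_s\hat F,[P^+,x\cdot\nabla]u\rangle\,dx+\int\Re\langle\partial_t\hat F,[P^-,x\cdot\nabla]u\rangle\,dx$, i.e.\ a pairing of $\partial_s\hat F$, $\partial_t\hat F$ with directions other than $u^+$, $u^-$. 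The hypotheses $(F_1)$--$(F_4)$ contain no pointwise growth bound on $|\partial_s\hat F|$ or $|\partial_t\hat F|$; they only control the radial pairings $\langle\partial_s\hat F,s\rangle$ and $\langle\partial_t\hat F,t\rangle$. So this error term cannot even be estimated from the assumptions, let alone be given a sign --- and since the entire conclusion of part (1) is that a quantity of exactly this order, $b_2\int(|u^+|^p+|u^-|^2)\,dx$, must vanish, an unsigned remainder of the same size is fatal. The smoothing property of the commutator (symbol of order $-1$, proportional to $m$) does not help here, because smoothing does not produce smallness relative to $\int(|u^+|^p+|u^-|^2)$.

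Two smaller remarks. First, your preliminary inequalities from pairing the equation with $u^+$ and $u^-$ are correct, but they carry no dimensional (scaling) information and cannot by themselves rule out solutions for $\omega\ge m$; the Pohozaev identity is indispensable, which is why the commutator question is unavoidable in your scheme. Second, to make your part (1) rigorous one would either have to prove that the commutator contribution vanishes or is controlled (something the paper implicitly assumes but does not do), or to strengthen the hypotheses by a gradient growth condition such as $|\partial_s\hat F|+|\partial_t\hat F|\le C(|s|^{p-1}+|t|)$, after which the term is at least finite --- though its sign would still have to be addressed before the nonexistence argument goes through.
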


\begin{figure}[ht]
  \centering
\includegraphics[scale=0.6]{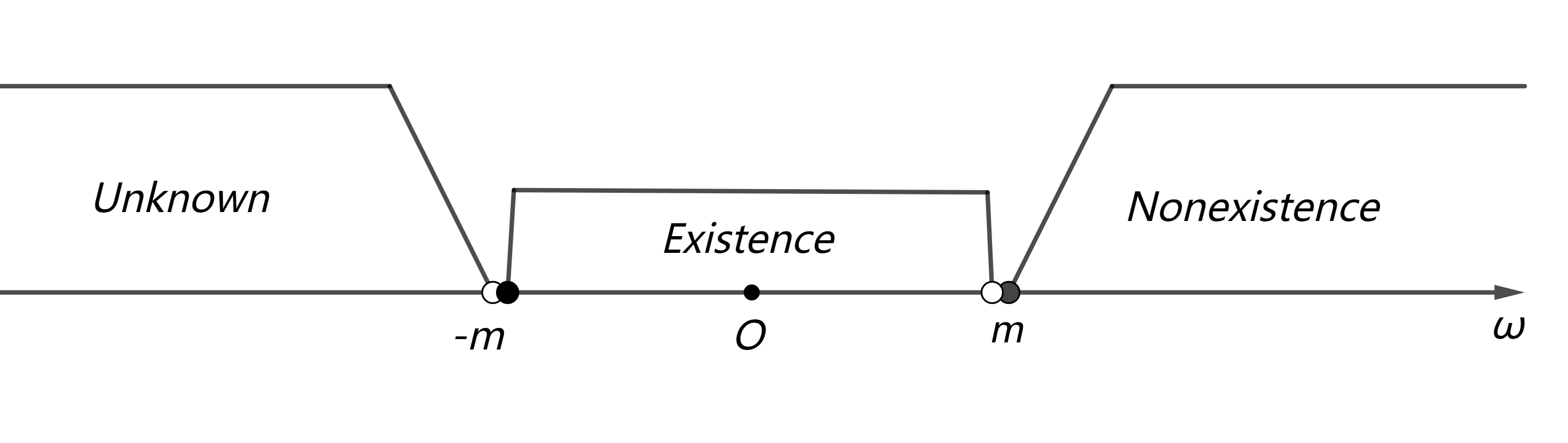}
\caption{Existence and nonexistence results with respect to frequency $\omega$.}
\label{fig:graph}
\end{figure}

\newpage
%According to Theorem \ref{mainthm}, we can see that the solutions to the nonlinear Dirac equation differs at the points $\omega=-m$ and $m$. This phenomenon can be viewed from the perspective of spectral theory, that is, the existence of solitary wave solutions to the corresponding nonlinear Dirac equation depends on the spectrum of the linear operator $D-\omega$. 

It is worth mentioning that in \cite{MR2892774}, the Soler model in (1+1) dimension possesses a solitary wave with frequency $-\omega$ when there is a solitary wave with frequency $\omega\in \mathbb{R}$. Our results describe a different phenomenon, that is the solitary wave solution exists when $\omega=-m$, but not when $\omega=m$.
These results rely on the spectrum of the linear operator $D-\omega$.
When $\omega \in (-m, m)$, we have a spectral gap in the linear operator $D-\omega$, and gap solitons can be obtained. However, when $\omega \geq m$,
there are no solitary wave solutions and when $\omega = -m$, there is a solitary wave solution.
It is interesting to note that solitary wave solutions can exist when the frequency lies on the left boundary point of the spectrum, but not on the right boundary point.
From a physical perspective, this implies an asymmetry phenomenon in the sense that, while the propagation of a gap soliton with respect to time is symmetric,
its propagation at the boundary points is asymmetric, and if you observe from front to back, the critical state can be seen, but if you observe from back to front, you may not be able to see it. This is something that would not happen in an idealized state.
When the nonlinear medium has a certain different effect on positive and negative spaces, it results in an asymmetrical state. 

%To put it in simpler terms, the difference in the nonlinearity of the medium leads to an asymmetrical arrangement that wouldn't be present in an idealized situation.
%{\color{red}The result could be interpreted as a bifurcation result: there is a bifurcation branch in the parameter $\omega$ which starts in the point $m$, and ends in the point $-m$.}

\begin{definition}
    The number $\omega_0\in \mathbb{R}$ is said to be a bifurcation point on the left for the nonlinear Dirac equation if there exists a sequence of nontrivial solutions $\{(\omega, u_\omega)\}\subset \mathbb{R}\times E$ satisfying 
\[Du_\omega-\omega u_\omega=F_u(u_\omega),\quad  \|u_\omega\| \rightarrow 0, \quad \omega \rightarrow \omega_0-.\]
\end{definition}
It is natural to ask 
\begin{question}\label{question1}
   Is $\omega=m$ a bifurcation point on the left for the nonlinear Dirac equation mentioned in Theorem \ref{mainthm}? If so, at what rate do the corresponding sequences converge to zero?
\end{question}
We give a positive answer to Question \ref{question1} in the following theorem:

\begin{theorem}\label{mainthm2}
   Let $(F_1)-(F_4)$ be satisfied with $p\in (2,8/3)$. Then $m$ is a bifurcation point on the left for the nonlinear Dirac equation \eqref{eq1.1}. Moreover, we have we have 
    \[\|u_\omega\| \leq C(m-\omega)^{\frac{8-3p}{2(p-2)}}\rightarrow 0,\quad \text{as}\quad \omega\rightarrow m-.\]
\end{theorem}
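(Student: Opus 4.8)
The plan is to realize $\omega=m$ as a left bifurcation point by taking, for each $\omega\in(-m,m)$ close to $m$, the nontrivial solution $u_\omega$ furnished by Theorem \ref{mainthm}(2), and showing that its norm is governed by the gap width $\lambda:=m-\omega$. Writing the energy functional associated with \eqref{eq1.1} as
\[ I_\omega(u)=\tfrac12\big(\|u^+\|^2-\|u^-\|^2\big)-\tfrac{\omega}{2}\|u\|_{L^2}^2-\int_{\R^3}F(u), \]
the solution $u_\omega$ is obtained at a minimax (linking) level $c_\omega=I_\omega(u_\omega)\ge 0$. The argument splits into two independent estimates: (i) a structural bound $\|u_\omega\|^2\le C\lambda^{-1}c_\omega$ extracted from the equation and $(F_3)$–$(F_4)$, and (ii) an energy bound $c_\omega\le C\lambda^{\frac{6-2p}{p-2}}$ coming from a concentrating test function. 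Chaining them gives $\|u_\omega\|^2\le C\lambda^{\frac{8-3p}{p-2}}$, the claimed rate.

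For (i) I would test $Du_\omega-\omega u_\omega=F_u(u_\omega)$ against $u_\omega^+$ and $u_\omega^-$ separately, noting $\langle F_u(u),u^+\rangle=\int\langle\partial_s\hat F,u^+\rangle$ and $\langle F_u(u),u^-\rangle=\int\langle\partial_t\hat F,u^-\rangle$. The $E^-$ projection combined with the lower bound in $(F_4)$ gives $\|u_\omega^-\|^2\le d_2\int|u_\omega^+|^p$; the $E^+$ projection combined with $|D|\ge m$ and the upper bound in $(F_4)$ gives $\tfrac{\lambda}{m}\|u_\omega^+\|^2\le c_1\int|u_\omega^+|^p+d_1\|u_\omega^-\|_{L^2}^2$, into which the preceding $E^-$ bound is inserted to yield $\|u_\omega^+\|^2\le C\lambda^{-1}\int|u_\omega^+|^p$. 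Finally the Nehari-type identity $c_\omega=\tfrac12\langle F_u(u_\omega),u_\omega\rangle-\int F(u_\omega)$ together with the lower bound in $(F_3)$ gives $\int|u_\omega^+|^p\le\tfrac{2}{b_1}c_\omega$. Combining the three inequalities produces $\|u_\omega\|^2=\|u_\omega^+\|^2+\|u_\omega^-\|^2\le C\lambda^{-1}c_\omega$ for small $\lambda$.

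For (ii) I would bound the linking level by evaluating $I_\omega$ on an admissible set built from a single concentrating profile $\phi_\lambda\in E^+$, taken as the positive-energy projection of $\psi(\lambda\,\cdot)$ for a fixed $\psi$. Since at frequencies $|\xi|\sim\lambda\ll m$ the projector $P^+$ is close to the constant spinor projector $\tfrac12(I+\beta)$, the projection preserves the scaling, and one computes $A:=\langle(|D|-\omega)\phi_\lambda,\phi_\lambda\rangle\sim\lambda^{-2}$ and $B:=\int|\phi_\lambda|^p\sim\lambda^{-3}$. The separated lower bound $\hat F(s,t)\ge a_1(|s|^p+|t|^2)$ from $(F_2)$ forces the supremum over the $E^-$–direction to be attained essentially at $v=0$, so $\sup_{t\ge0,\,v\in E^-}I_\omega(t\phi_\lambda+v)\le\max_{t\ge0}\big(\tfrac{t^2}{2}A-a_1t^pB\big)=C\,(A^p/B^2)^{1/(p-2)}\le C\lambda^{\frac{6-2p}{p-2}}$.

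Putting (i) and (ii) together gives $\|u_\omega\|^2\le C\lambda^{\frac{6-2p}{p-2}-1}=C\lambda^{\frac{8-3p}{p-2}}$, i.e. $\|u_\omega\|\le C(m-\omega)^{\frac{8-3p}{2(p-2)}}$; the restriction $p<8/3$ is precisely what makes this exponent positive, so $\|u_\omega\|\to0$ and $m$ is a left bifurcation point. The main obstacle I anticipate is step (ii): making the linking characterization rigorous for the strongly indefinite functional, and in particular justifying that the scaled, projected test function remains in $E^+$ with the asserted scalings of $A$ and $B$ while the negative-space supremum stays harmless — this is exactly where the asymmetry between the $|s|^p$ and $|t|^2$ terms of $(F_2)$–$(F_4)$ is exploited. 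A secondary point is to confirm that the solution of Theorem \ref{mainthm}(2) indeed sits at the linking level $c_\omega$, so that the upper bound on $c_\omega$ transfers to $I_\omega(u_\omega)$.
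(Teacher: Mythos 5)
Your proposal is correct and its skeleton matches the paper's proof: both arguments chain (i) a structural bound, obtained from the Nehari-type identity $c_\omega=\tfrac12\langle\Psi'(u_\omega),u_\omega\rangle-\Psi(u_\omega)\ge\tfrac{b_1}{2}\|u_\omega^+\|_{L^p}^p$ (from $(F_3)$) together with testing the equation against the spectral components and invoking $(F_4)$ and the gap inequality, with (ii) the level estimate $c_\omega\le C(m-\omega)^{\frac{6-2p}{p-2}}$; the paper's version of (i) tests with $u_\omega^+-u_\omega^-$ in one stroke rather than with $u_\omega^{\pm}$ separately, but the content is identical. The genuine difference is in (ii). The paper does not use a concentrating profile: it takes a nontrivial \emph{periodic} solution $\tilde u_m$ of the band-edge equation $-i\alpha\cdot\nabla u+m\beta u=mu$ (imported from a lemma of Ding--Yu), truncates it at spatial scale $k=1/(m-\omega)$ with normalization $k^{-3/2}\eta(x/k)\tilde u_m(x)$, and verifies $Q_\omega(\tilde z_\omega^+)=O(m-\omega)$ and $\|\tilde z_\omega^+\|_{L^p}^p=O\bigl((m-\omega)^{\frac{3(p-2)}{2}}\bigr)$ by a Riemann--Lebesgue mean-value argument; maximizing $\tfrac{s^2}{2}Q_\omega(\tilde z_\omega^+)-a_1 s^p\|\tilde z_\omega^+\|_{L^p}^p$ then yields exactly your exponent. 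Your scaled profile $P^+\psi(\lambda\cdot)$ is, after $L^2$-normalization, quantitatively equivalent (both constructions carry quadratic-form excess $\sim\lambda$ per unit $L^2$ mass, reflecting the quadratic dispersion $\sqrt{|\xi|^2+m^2}-m\approx|\xi|^2/2m$ at the band edge), and it is more self-contained, since it avoids the existence theory for periodic band-edge solutions; what it costs is precisely the technical point you flag, namely the projector asymptotics, and specifically the lower bound $\|P^+\psi(\lambda\cdot)\|_{L^p}^p\ge c\lambda^{-3}$, which requires a low-frequency Mikhlin-type multiplier argument in the spirit of Lemma \ref{projection}.

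Two remarks that would tighten your write-up. First, for the quadratic form the projector asymptotics can be bypassed entirely: since $P^{\pm}$ commute with $|D|$ and $|D|-\omega\ge m-\omega>0$, one has
\begin{equation*}
Q_\omega\bigl(P^+\psi_\lambda\bigr)=\Re\bigl((|D|-\omega)P^+\psi_\lambda,P^+\psi_\lambda\bigr)_{L^2}\le\Re\bigl((|D|-\omega)\psi_\lambda,\psi_\lambda\bigr)_{L^2}\le C\lambda^{-2},
\end{equation*}
so only the $L^p$ lower bound genuinely needs the symbol analysis. Second, your ``secondary point'' (that $u_\omega$ sits at the linking level) is asserted rather than proved in the paper as well; for the theorem it suffices to have the inequality $\Phi_\omega(u_\omega)\le c_\omega$, which follows by Fatou's lemma along the Cerami sequence, using the pointwise nonnegativity of $\tfrac12\langle\partial_s\hat F,s\rangle+\tfrac12\langle\partial_t\hat F,t\rangle-\hat F$ guaranteed by $(F_3)$.
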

\begin{figure}[ht] 
    \centering
    \includegraphics[width=0.5\linewidth]{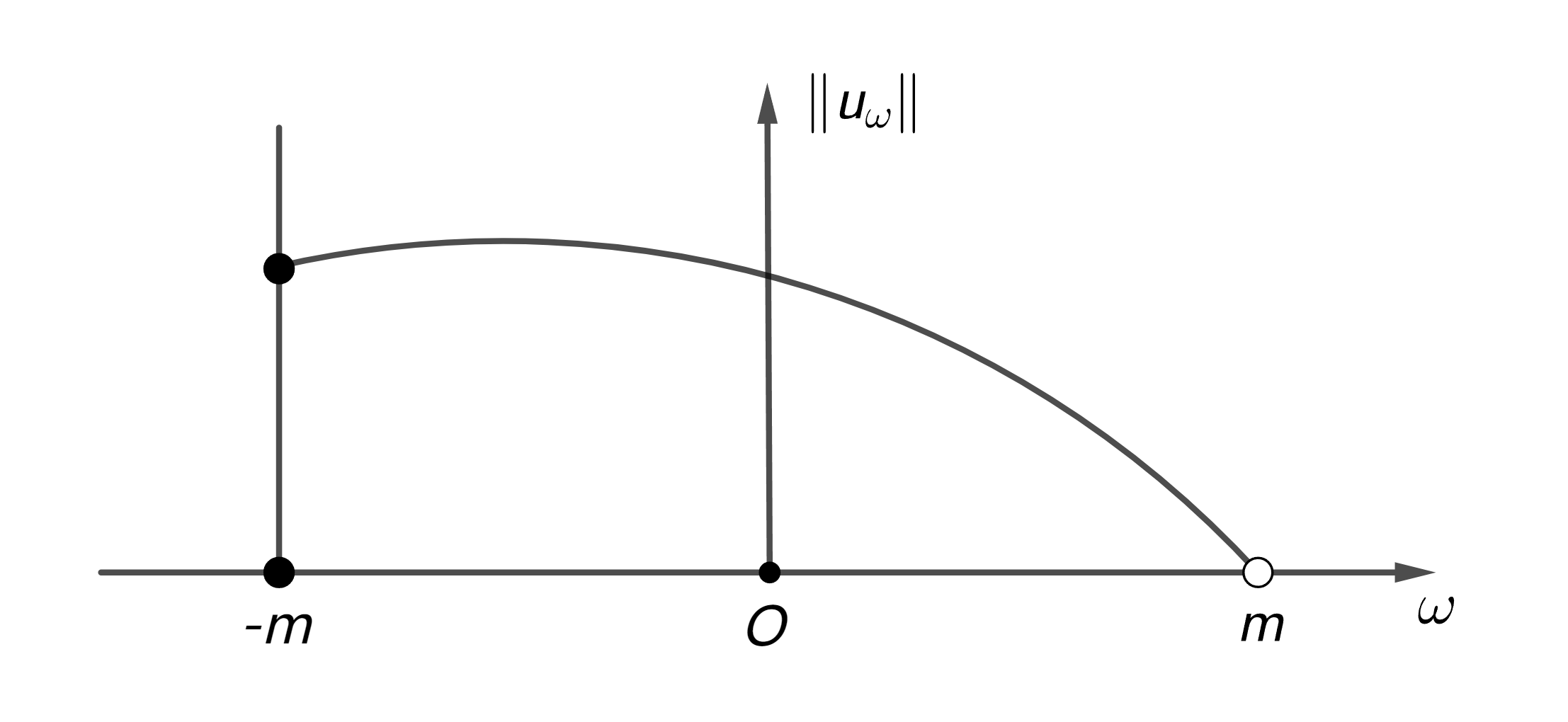}
    \caption{Bifurcation branch in the parameter $\omega$.}
    \label{fig:2}
\end{figure}
For bifurcation into the spectral gap, Heinz, K\"uper and Stuart applied some of their previous results on bifurcation theory to study the nonlinear Schr\"odinger equation, see \cite{HEINZ1992341}. They show that the right boundary point of the spectral gap is a bifurcation point. In \cite{KUPPER1992893,LRT,stuart83}, it is also shown that no bifurcation from 0 can occur at the left boundary point of the spectral gap. Based on the above discussions, we show that the point $-m$ can be reached by the bifurcation branch,
it leads to the following question.
\begin{question}\label{q-1}
What happens with the branch on the left 
 of $\omega =-m$: can it be extended a little bit to the left of $-m$ into the spectrum?
\end{question}

Unfortunately, we know nothing about the situation when $\omega<-m$. However, judging from the trend on Figure \ref{fig:2}, we conjecture that this branch will extend to the left of $-m$ and disappear at some point.

There are many difficulties from a mathematical perspective. When trying to obtain a nonexistence result for $\omega \geq m$, Pohozaev's identity of \eqref{eq1.1} needs to be established.
Finding the gap soliton requires the use of a critical point theorem for  the strongly indefinite problem.
 We need to analyze the topological properties and geometric (linking) structures of the corresponding functional.
However, when the frequency equals $-m$, the second linking condition fails and $\|u\|$ cannot be controlled by $\|u^+\|$.
Unfortunately, previous methods on the {\it Spectrum Zero Problem} cannot be applied to \eqref{eq1.1}, as the spectrum of the linear operator is unbounded from both the top and bottom,
there is no proper embedding of the current working space, and $L^2$-boundedness of $(C)_c$-sequences cannot be proven.
To tackle this issue, a new approach has been developed here. It involves constructing a $(PS)_c$-sequence through perturbation,
demonstrating the uniform bounded properties of the sequence and energy, and finally taking the limit of the sequence.

\medskip

\noindent{\bf Outline of the paper.} This paper is organized as follows. In section \ref{sec2}, we present some preliminary notions on the Dirac operator and some basic facts which will be used later.
And in section \ref{sec3}, we show the nonexistence result for the case $\omega\geq m$. Then we investigate the existence of solitary solutions when $\omega\in (-m,m)$ in section \ref{sec4} and  show the proof of the existence result when $\omega=-m$ in section \ref{sec5}. At last, we give a proof of Theorem \ref{mainthm2} in section \ref{sec6}.

\par \medskip

\noindent{\bf Acknowledgments.}
The first author was supported by the National Natural Science Foundation of China (NSFC 12201625), the second author was supported by NSFC 12341102 and the Beijing Natural Science Foundation (Z210002). The third author was supported by RIMS, the Research Institute for Mathematical Sciences, an International Joint Usage/Research Center located at Kyoto University.

\section{Preliminaries and Technical Lemmas}\label{sec2}

\subsection{Notations}\label{sec2.1}
In this paper, we denote $\alpha\cdot\nabla=\sum\limits_{k=1}^3 \alpha_k\partial_k$, and $\partial_k=\frac{\partial}{\partial x_k}$, where the
matrices $\alpha_1$, $\alpha_2$, $\alpha_3$ and $\beta$ are $4\times 4$ Dirac matrices,
 \begin{align*}
\alpha_k=\begin{pmatrix}
0 &\sigma_k\\
\sigma_k &0
\end{pmatrix},\beta=\begin{pmatrix}
I_2 &0\\
0 &-I_2
\end{pmatrix},
\end{align*}
with
\begin{align*}
\sigma_1=\begin{pmatrix}
0 &1\\
1&0
\end{pmatrix}, \sigma_2=\begin{pmatrix}
0 &-i\\
i&0
\end{pmatrix}, \sigma_3=\begin{pmatrix}
1&0\\
0&-1
\end{pmatrix}.
\end{align*}
It is clear that the Dirac matrices satisfy the following Clifford relations:
\begin{align*}
\begin{cases}
\alpha_k^*=\alpha_k,\ \beta^*=\beta,\\
\alpha_k\alpha_l+\alpha_l\alpha_k=2\delta_{kl}I_4,\
\alpha_k\beta+\beta\alpha_k=0,\
\beta^2=I_4.
\end{cases}
\end{align*}
The gamma matrices are also $4\times 4$ matrices which satisfy the following anti-commutation relations:
\[\{\gamma^\mu,\gamma^\nu\}=\gamma^\mu\gamma^\nu+\gamma^\nu\gamma^\mu=2g^{\mu\nu}I_4,\]
where $g^{\mu\nu}$ is the Minkowski metric element, and the indices $\mu,\nu$ run over $0,1,2,3$. They can be formed as
 \begin{align*}
\gamma^\mu=\begin{pmatrix}
0 &\sigma_\mu\\
-\sigma_\mu &0
\end{pmatrix},\gamma^0=\begin{pmatrix}
I_2 &0\\
0 &-I_2
\end{pmatrix},
\end{align*}
 we have $\gamma^\mu=\beta\alpha_\mu$, if $\mu=1,2,3$, and $\gamma^0=\beta$. Another matrix plays an important role in the parity transformation, the $\gamma^5$ matrix, which is defined by
 \[ \gamma^5=i\gamma^0\gamma^1\gamma^2\gamma^3=\begin{pmatrix}
 0 & I_2\\
 I_2 & 0
 \end{pmatrix}.
 \]
 The Dirac adjoint of $\psi$ is defined as
 \[\bar{\psi}=\psi^\dagger \gamma^0.\]
\subsection{Technical Lemmas}

Let $D=-i \alpha\cdot \nabla+m \beta$ be the free Dirac operator, which is a self-adjoint operator on $L^2(\mathbb{R}^3,\mathbb{C}^4)$ with domain $H^1(\mathbb{R}^3,\mathbb{C}^4)$ and formal domain $H^{1/2}(\mathbb{R}^3,\mathbb{C}^4)$. It is clear that $$\sigma (D)=\sigma_c(D)=\mathbb{R}\setminus (-m ,m),$$
Set $E:=\text{dom}(|D|^{1/2})$ equipped with the following real inner product
\[ ( u,v) :=\Re (|D|^{1/2}u,|D|^{1/2}v)_{L^2},\]
where $(\cdot, \cdot)_{L^2}$ is the complex inner product in $L^2(\mathbb{R}^3,\mathbb{C}^4)$. Then we have

\begin{lemma}\label{lem2.1}
$E\cong H^{1/2}(\mathbb{R}^3,\mathbb{C}^4)$, hence $E$ embeds continuously into $L^p$, for all $p\in [2,3]$ and compactly into $L^p_{loc}$ for all $p\in [1,3)$.
Moreover, we have
\[m\|u\|^2_{L^2}\leq \|u\|^2,\quad \forall u\in E.\]
\end{lemma}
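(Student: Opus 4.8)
The plan is to compute $|D|$ explicitly via the Fourier transform, identify $E$ with $H^{1/2}$ by a two-sided comparison of the corresponding Fourier multipliers, and then invoke the classical fractional Sobolev embeddings. Since the norm induced by the real inner product is $(u,u)=\||D|^{1/2}u\|_{L^2}^2$, which is automatically real, the use of the real part plays no role in any of the norm estimates below.

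First I would verify the pointwise operator identity $D^2=(-\Delta+m^2)I_4$. Writing $D=-i\alpha\cdot\nabla+m\beta$ and expanding, the cross terms $-im\sum_k(\alpha_k\beta+\beta\alpha_k)\partial_k$ vanish and $(-i\alpha\cdot\nabla)^2=-\Delta\,I_4$, both by the Clifford relations $\alpha_k\alpha_l+\alpha_l\alpha_k=2\delta_{kl}I_4$ and $\alpha_k\beta+\beta\alpha_k=0$; together with $\beta^2=I_4$ this gives $D^2=(-\Delta+m^2)I_4$, consistent with the stated formula $|D|=\sqrt{-\Delta+m^2}\,I$. On the Fourier side the symbol is $\widehat{D}(\xi)=\alpha\cdot\xi+m\beta$, whose square is $(|\xi|^2+m^2)I_4$, so functional calculus yields $\widehat{|D|^{1/2}u}(\xi)=(|\xi|^2+m^2)^{1/4}\,\hat u(\xi)$ and hence
\[(u,u)=\int_{\mathbb{R}^3}\sqrt{|\xi|^2+m^2}\;|\hat u(\xi)|^2\,d\xi.\]

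Next I would compare this with the standard $H^{1/2}$ norm $\int_{\mathbb{R}^3}\sqrt{1+|\xi|^2}\,|\hat u|^2\,d\xi$. Since
\[\min\{1,m\}\,\sqrt{1+|\xi|^2}\;\le\;\sqrt{|\xi|^2+m^2}\;\le\;\max\{1,m\}\,\sqrt{1+|\xi|^2}\qquad\text{for all }\xi\in\mathbb{R}^3,\]
the two norms are equivalent, so $E$ and $H^{1/2}(\mathbb{R}^3,\mathbb{C}^4)$ coincide as sets with equivalent norms, proving $E\cong H^{1/2}$. The embedding assertions then follow from the fractional Sobolev embedding theorem: in dimension $n=3$ with $s=\tfrac12$ the critical exponent is $2n/(n-2s)=3$, which gives the continuous embedding $H^{1/2}\hookrightarrow L^p$ for $p\in[2,3]$; the Rellich--Kondrachov theorem on any bounded domain $\Omega$ gives $H^{1/2}(\Omega)\hookrightarrow\hookrightarrow L^p(\Omega)$ for $p<3$, and combining this with $L^q(\Omega)\hookrightarrow L^p(\Omega)$ for $p\le q$ on the finite-measure set $\Omega$ yields compactness into $L^p_{loc}$ for every $p\in[1,3)$. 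Finally, the lower bound is immediate from the Fourier representation, since $\sqrt{|\xi|^2+m^2}\ge m$ pointwise gives
\[\|u\|^2=\int_{\mathbb{R}^3}\sqrt{|\xi|^2+m^2}\;|\hat u|^2\,d\xi\;\ge\;m\int_{\mathbb{R}^3}|\hat u|^2\,d\xi=m\,\|u\|_{L^2}^2.\]

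None of these steps constitutes a genuine obstacle: the identity $D^2=-\Delta+m^2$ is purely algebraic, the norm equivalence is elementary, and the Sobolev and Rellich--Kondrachov embeddings are classical. The only point requiring slight care is the endpoint of the compact embedding, where compactness holds only for exponents strictly below the critical value $3$; this is precisely why the stated range is the half-open interval $[1,3)$ rather than $[1,3]$.
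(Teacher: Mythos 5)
Your proof is correct and complete: the Clifford-relation computation giving $D^2=(-\Delta+m^2)I_4$, the resulting Fourier characterization $\|u\|^2=\int\sqrt{|\xi|^2+m^2}\,|\hat u|^2\,d\xi$, the two-sided multiplier comparison with $\sqrt{1+|\xi|^2}$, and the classical fractional Sobolev and Rellich--Kondrachov embeddings are exactly the standard argument. The paper itself states this lemma without proof, treating it as a known fact about the free Dirac operator, and your write-up supplies precisely the canonical justification (including the pointwise bound $\sqrt{|\xi|^2+m^2}\ge m$ for the final inequality), so there is nothing to correct.
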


We recall a lemma due to P.L. Lions. The proof of this lemma can be found in \cite{MR4320890} or \cite{Kryszewski}.
\begin{lemma}\label{Lem3.3}
	Fix $r>0$ and $s \in\left[2,3\right)$. If $\left\{u_{n}\right\}$ is bounded in $H^{1/2}\left(\mathbb{R}^{3}, \mathbb{C}^4\right)$ and if
	$$
	\sup _{y \in \mathbb{R}^{3}} \int_{B(y, r)}\left|u_{n}\right|^{s} d x \rightarrow 0 \quad \text { as } n \rightarrow \infty,
	$$
	then $u_{n} \rightarrow 0$ in $L^{t}\left(\mathbb{R}^{3}, \mathbb{C}^4\right)$ for any $t \in\left(2,3\right)$.
\end{lemma}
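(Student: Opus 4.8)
The plan is to reduce the assertion to convergence in a single, conveniently chosen intermediate exponent and then to recover the whole range $(2,3)$ by interpolation against the uniform bounds provided by \Cref{lem2.1}. Since $\{u_n\}$ is bounded in $H^{1/2}$, \Cref{lem2.1} guarantees that it is bounded in $L^2$ and in $L^3$. Hence, once I establish $u_n\to 0$ in $L^{t_0}$ for the single exponent
\[
t_0:=2+\tfrac{s}{3}\in(2,3),
\]
I obtain convergence for every $t\in(2,3)$: for $t\in(2,t_0)$ I interpolate between $L^2$ (bounded) and $L^{t_0}$ (vanishing), and for $t\in(t_0,3)$ between $L^{t_0}$ (vanishing) and $L^3$ (bounded). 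Thus the entire difficulty is concentrated in one exponent, and I am free to pick $t_0$ to make the exponents below work out exactly.

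To treat $t_0$ I would run the standard Lions covering argument. On each ball $B=B(y,r)$ the interpolation identity $\tfrac1{t_0}=\tfrac{1-\theta}{s}+\tfrac{\theta}{3}$ together with the choice $t_0=2+\tfrac{s}{3}$ forces $\theta=2/t_0\in(0,1)$, so that $(1-\theta)t_0=t_0-2=\tfrac{s}{3}$ and $\theta t_0=2$. Hölder's inequality then gives
\[
\int_B |u|^{t_0}\,dx \le \norm{u}_{L^s(B)}^{(1-\theta)t_0}\,\norm{u}_{L^3(B)}^{\theta t_0}=\Big(\int_B|u|^s\,dx\Big)^{1/3}\,\norm{u}_{L^3(B)}^{2}.
\]
Applying the local fractional Sobolev embedding $\norm{u}_{L^3(B)}\le C\,\norm{u}_{H^{1/2}(B)}$, whose constant is uniform in $y$ by translation invariance and the fact that all balls share the radius $r$, this becomes
\[
\int_B |u|^{t_0}\,dx \le C\Big(\int_B |u|^s\,dx\Big)^{1/3}\,\norm{u}_{H^{1/2}(B)}^{2}.
\]

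Next I cover $\mathbb{R}^3$ by balls $\{B_i=B(y_i,r)\}$ of fixed radius with uniformly bounded overlap $N$, bound the first factor by $\big(\sup_{y}\int_{B(y,r)}|u_n|^s\,dx\big)^{1/3}$ (independent of $i$), and sum to get
\[
\int_{\mathbb{R}^3} |u_n|^{t_0}\,dx \le C\Big(\sup_{y}\int_{B(y,r)}|u_n|^s\,dx\Big)^{1/3}\sum_i \norm{u_n}_{H^{1/2}(B_i)}^{2} \le C N\Big(\sup_{y}\int_{B(y,r)}|u_n|^s\,dx\Big)^{1/3}\norm{u_n}_{H^{1/2}(\mathbb{R}^3)}^{2}.
\]
As $\{u_n\}$ is bounded in $H^{1/2}$ while the supremum tends to $0$ by hypothesis, the right-hand side vanishes, yielding $u_n\to0$ in $L^{t_0}$ and hence, via the first paragraph, in every $L^t$ with $t\in(2,3)$.

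The main obstacle is the summation bound $\sum_i \norm{u_n}_{H^{1/2}(B_i)}^2 \le N\,\norm{u_n}_{H^{1/2}(\mathbb{R}^3)}^2$, which is not as immediate as in the $H^1$ case because the $H^{1/2}$-seminorm is nonlocal: the Gagliardo double integral over $B_i\times B_i$ does not sum as cleanly as an $L^2$-norm of a gradient. I would handle this through the estimate
\[
[u]_{H^{1/2}(B_i)}^2=\int_{B_i}\!\int_{B_i}\frac{|u(x)-u(y)|^2}{|x-y|^{4}}\,dx\,dy\le \int_{B_i}\!\int_{\mathbb{R}^3}\frac{|u(x)-u(y)|^2}{|x-y|^{4}}\,dy\,dx,
\]
and then invoke bounded overlap in the outer variable $x$ to sum the right-hand sides to $N\,[u]_{H^{1/2}(\mathbb{R}^3)}^2$; combined with the trivial $\sum_i\norm{u}_{L^2(B_i)}^2\le N\norm{u}_{L^2}^2$ this controls the full local norms. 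A secondary point needing care is precisely the uniformity of the local embedding constant, which I address by reducing every ball to $B(0,r)$ via translation, using that the seminorm is translation invariant.
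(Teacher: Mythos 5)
Your proof is correct. Note that the paper itself contains no proof of Lemma \ref{Lem3.3}: it is quoted as a known Lions-type vanishing lemma, with the proof deferred to \cite{MR4320890} and \cite{Kryszewski}, so there is no internal argument to compare against; what you have written is essentially the standard proof from that literature, transposed from $H^1$ to $H^{1/2}$ (cover $\mathbb{R}^3$ by balls of radius $r$ with bounded overlap, interpolate on each ball between $L^s$ and the critical space $L^3$, sum, then interpolate globally using the $L^2$ and $L^3$ bounds supplied by Lemma \ref{lem2.1}). Your exponent bookkeeping is right: with $t_0=2+s/3$ one has $(1-\theta)t_0=s/3$ and $\theta t_0=2$, which is consistent with $\tfrac{1}{t_0}=\tfrac{1-\theta}{s}+\tfrac{\theta}{3}$, and the local critical embedding $H^{1/2}(B(y,r))\hookrightarrow L^{3}(B(y,r))$ does hold with a constant depending only on $r$, since balls are extension domains and the intrinsic norm is translation invariant. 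You also correctly isolated the one point where the fractional setting genuinely differs from the classical $H^1$ argument — the local Gagliardo seminorms over a cover do not sum to the global one by mere overlap counting — and your fix (enlarge the inner domain of the double integral to $\mathbb{R}^3$, then use bounded overlap only in the outer variable) is the standard and valid way to close that step; it implicitly uses the equivalence of the Gagliardo norm with the $\||D|^{1/2}\cdot\|_{L^2}$-based norm of the paper, which is classical and worth a one-line remark but is not a gap.
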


The proof of the following Lemma can be found in \cite{MR4610809}. It is based on Fourier transform and a Mikhlin-type Fourier multiplier theorem.

\begin{lemma}\label{projection}
Let $E^\pm_q:=E^\pm\cap L^q$ for $q\in (1,\infty)$. Then there holds
\[L^q=\text{cl}_q E^+_p\oplus \text{cl}_q E^-_q,\]
where $\text{cl}_q$ denotes the closure with respect to the norm in $L^q$. That is, there exists $\tau_q>0$ for every $q\in (1,\infty)$ such that
\[\tau_q\|u^\pm\|_{L^q}\leq \|u\|_{L^q}, \ \forall u\in E\cap L^q.\]
\end{lemma}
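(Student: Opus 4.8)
The plan is to realize the spectral projections $P^{\pm}$ as matrix-valued Fourier multipliers and to prove their $L^q$-boundedness by a Hörmander--Mikhlin multiplier theorem; once this is in hand, both the norm inequality and the topological splitting follow formally. First I would pass to the Fourier side. With the convention $\widehat{\partial_k u}=i\xi_k\widehat u$, the operator $D=-i\alpha\cdot\nabla+m\beta$ acts as multiplication by the Hermitian symbol $M(\xi)=\alpha\cdot\xi+m\beta$. The Clifford relations give
\[
M(\xi)^2=(\alpha\cdot\xi)^2+m\bigl((\alpha\cdot\xi)\beta+\beta(\alpha\cdot\xi)\bigr)+m^2\beta^2=(|\xi|^2+m^2)\,I_4,
\]
so $|D|$ is multiplication by $\lambda(\xi):=\sqrt{|\xi|^2+m^2}$, and hence $P^{\pm}=\tfrac12(I\pm|D|^{-1}D)$ is the multiplier with symbol
\[
\sigma^{\pm}(\xi)=\frac12\left(I_4\pm\frac{\alpha\cdot\xi+m\beta}{\sqrt{|\xi|^2+m^2}}\right).
\]
Because $M(\xi)^2=\lambda(\xi)^2 I_4$ and $M(\xi)$ is Hermitian, its eigenvalues are $\pm\lambda(\xi)$, so $\|M(\xi)/\lambda(\xi)\|=1$ and each $\sigma^{\pm}(\xi)$ is the orthogonal eigenprojection onto the $\pm\lambda(\xi)$-eigenspace; in particular $\sigma^{\pm}$ are uniformly bounded idempotents with $\sigma^++\sigma^-=I_4$ and $\sigma^+\sigma^-=0$.

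Next I would verify the Hörmander--Mikhlin estimates for the $\mathcal{B}(\mathbb{C}^4)$-valued symbol, namely
\[
\bigl\|\partial_\xi^{\gamma}\sigma^{\pm}(\xi)\bigr\|\leq C_{\gamma}\,|\xi|^{-|\gamma|},\qquad 0\leq|\gamma|\leq 2,
\]
which is the order $\lfloor 3/2\rfloor+1=2$ required in dimension three. The crucial feature is that $\lambda(\xi)\geq m>0$, so $\sigma^{\pm}$ is smooth on all of $\mathbb{R}^3$ with no singularity at the origin; differentiating the entries $\xi_k/\lambda(\xi)$ and $m/\lambda(\xi)$, each derivative of order $|\gamma|$ produces a factor decaying like $\lambda(\xi)^{-|\gamma|}\lesssim|\xi|^{-|\gamma|}$ for $|\xi|\geq 1$, while for $|\xi|\leq 1$ smoothness makes $|\xi|^{|\gamma|}\partial^{\gamma}\sigma^{\pm}$ bounded (this is the statement that $\sigma^{\pm}$ is asymptotically homogeneous of degree $0$). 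I would then invoke the matrix/operator-valued Hörmander--Mikhlin theorem (Benedek--Calderón--Panzone type): a $\mathcal{B}(\mathbb{C}^4)$-valued symbol satisfying the above bounds defines an operator bounded on $L^q(\mathbb{R}^3,\mathbb{C}^4)$ for every $q\in(1,\infty)$. This yields $\|P^{\pm}u\|_{L^q}\leq C_q\|u\|_{L^q}$, and setting $\tau_q:=C_q^{-1}$ gives $\tau_q\|u^{\pm}\|_{L^q}\leq\|u\|_{L^q}$ for all $u\in E\cap L^q$, since $u^{\pm}=P^{\pm}u$.

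Finally, for the decomposition: on the dense subset $L^2\cap L^q$ the relations $(P^{\pm})^2=P^{\pm}$, $P^++P^-=I$ and $P^+P^-=0$ hold (they hold for $L^2$ spectral projections), so by continuity they persist on $L^q$. Thus $P^+,P^-$ are complementary bounded projections on $L^q$, their ranges are closed complementary subspaces, and $L^q=\mathrm{ran}(P^+)\oplus\mathrm{ran}(P^-)$ as a topological direct sum. To identify the summands with $\mathrm{cl}_q E^{\pm}_q$, note $E^{\pm}_q\subset\mathrm{ran}(P^{\pm})$ and the latter is closed, giving one inclusion; conversely any $w\in\mathrm{ran}(P^{\pm})$ is an $L^q$-limit $w=\lim P^{\pm}\varphi_n$ of Schwartz functions $\varphi_n\to w$, and each $P^{\pm}\varphi_n\in E^{\pm}_q$, so $w\in\mathrm{cl}_q E^{\pm}_q$. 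I expect the genuine work to be concentrated in the second step: carefully establishing the derivative bounds uniformly across the transition $|\xi|\le 1$ versus $|\xi|\ge 1$ and citing the correct matrix-valued form of the multiplier theorem. The symbol computation and the density/range identification in the last step are routine once the $L^q$-boundedness is secured.
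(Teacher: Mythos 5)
Your proposal is correct and follows exactly the route the paper indicates: the paper does not write out a proof but cites \cite{MR4610809}, noting only that the argument ``is based on Fourier transform and a Mikhlin-type Fourier multiplier theorem,'' which is precisely your realization of $P^{\pm}$ as the smooth matrix symbol $\sigma^{\pm}(\xi)=\tfrac12\bigl(I_4\pm(\alpha\cdot\xi+m\beta)/\sqrt{|\xi|^2+m^2}\bigr)$ together with the Mikhlin bounds and the idempotent/range argument for the splitting. No gaps: the nonvanishing of $\lambda(\xi)\geq m$ and the decay $\lambda(\xi)^{-|\gamma|}\leq|\xi|^{-|\gamma|}$ make the multiplier estimates work just as you say.
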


Denote $\Psi(u)=\displaystyle\int_{\mathbb{R}^3} F( u)dx=\displaystyle\int_{\mathbb{R}^3} \hat{F}( u^+,u^-)dx$, then we have
\begin{lemma}\label{lem2.4}
Under the assumptions of Theorem \ref{mainthm}, the following conclusions hold:
\begin{itemize}
\item[$(1)$] There exists $a_1, a_2>0$, such that
$$a_1 \left(\|u^+\|_{L^p}^p +\|u^-\|_{L^2}^2\right) \leq \Psi(u) \leq a_2\left(\|u^+\|_{L^p}^p+ \|u^-\|_{L^p}^p+ \|u^-\|_{L^2}^2\right) .$$
\item[$(2)$] There exists $b_1, b_2>0$, such that
$$ 2\Psi(u)+b_1  \|u^+\|_{L^p}^p \leq  \langle\Psi'(u),u\rangle\leq 3\Psi(u)-b_2\left( \|u^+\|_{L^p}^p +\|u^-\|_{L^2}^2\right).$$
\item[$(3)$] There exists $c_1>0$, $d_2\geq d_1>0$, such that
\[\langle\Psi'(u ),u ^+\rangle\leq c_1 \|u^+\|_{L^p}^p+ d_1\|u^-\|_{L^2}^2 ,\quad
\langle \Psi'(u),u^-\rangle \geq d_2\left(\|u^- \|_{L^2}^2-\|u^+\|_{L^p}^p \right).\]

\end{itemize}
\end{lemma}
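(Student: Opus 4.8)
The plan is to obtain all three items by integrating the pointwise inequalities $(F_2)$–$(F_4)$ over $\mathbb{R}^3$ under the substitution $s=u^+(x)$, $t=u^-(x)$, after first pinning down the Fréchet derivative of $\Psi$. I would begin by verifying that $\Psi\in\mathcal{C}^1(E,\mathbb{R})$ and that, for all $u,v\in E$,
\[
\langle\Psi'(u),v\rangle=\int_{\mathbb{R}^3}\Big(\langle\partial_s\hat F(u^+,u^-),v^+\rangle+\langle\partial_t\hat F(u^+,u^-),v^-\rangle\Big)\,dx,
\]
where $v^\pm=P^\pm v$. This is the chain rule applied to $F(u)=\hat F(P^+u,P^-u)$ combined with differentiation under the integral sign, justified by the continuity of $\nabla\hat F$ from $(F_1)$ and the growth controls furnished by $(F_2)$–$(F_4)$. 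The integrability needed throughout is supplied by Lemma~\ref{lem2.1}, which gives $E\hookrightarrow L^q$ for $q\in[2,3]$, together with Lemma~\ref{projection}, which gives $\|u^\pm\|_{L^q}\le\tau_q^{-1}\|u\|_{L^q}$; since $p\in(2,3)$, both $|u^+|^p$ and $|u^-|^2$ are integrable for every $u\in E$, so $\Psi$ is well defined and finite.

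With the derivative formula in hand, item $(1)$ is immediate: integrating the two-sided bound $(F_2)$ at $(s,t)=(u^+(x),u^-(x))$ over $\mathbb{R}^3$ turns $|s|^p$, $|t|^p$, $|t|^2$ into $\|u^+\|_{L^p}^p$, $\|u^-\|_{L^p}^p$, $\|u^-\|_{L^2}^2$, respectively. For item $(2)$ I would take $v=u$ in the derivative formula, so that $v^\pm=u^\pm$ and
\[
\langle\Psi'(u),u\rangle=\int_{\mathbb{R}^3}\Big(\langle\partial_s\hat F,u^+\rangle+\langle\partial_t\hat F,u^-\rangle\Big)\,dx;
\]
integrating the two-sided bound $(F_3)$ pointwise then produces exactly the claimed estimate, with $2\hat F$ and $3\hat F$ integrating to $2\Psi(u)$ and $3\Psi(u)$ and the remaining terms to the stated norms.

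The structural point driving item $(3)$ is that $P^+,P^-$ are complementary projections, whence $(u^+)^-=P^-P^+u=0$ and $(u^-)^+=P^+P^-u=0$. Substituting $v=u^+$ into the derivative formula therefore kills the $\partial_t\hat F$ term and leaves $\langle\Psi'(u),u^+\rangle=\int_{\mathbb{R}^3}\langle\partial_s\hat F(u^+,u^-),u^+\rangle\,dx$; likewise $\langle\Psi'(u),u^-\rangle=\int_{\mathbb{R}^3}\langle\partial_t\hat F(u^+,u^-),u^-\rangle\,dx$. Integrating the first and second bounds of $(F_4)$, respectively, then yields the two inequalities of $(3)$.

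The genuinely technical step — and the one I would expect to be the main obstacle — is not the integration but the rigorous justification that $\Psi\in\mathcal{C}^1(E)$ with the above derivative: one must check that the integrand defining $\langle\Psi'(u),v\rangle$ lies in $L^1$ uniformly enough to differentiate under the integral and to see that $u\mapsto\Psi'(u)$ is continuous from $E$ into $E^*$. This is where the interplay between the growth exponents in $(F_2)$–$(F_4)$, the range $p\in(2,3)$, the embedding $E\hookrightarrow L^p$, and the $L^q$-boundedness of the spectral projections $P^\pm$ from Lemma~\ref{projection} is essential; once this is secured, items $(1)$–$(3)$ reduce to monotone integration of scalar inequalities.
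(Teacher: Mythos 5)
Your proposal is correct and follows essentially the same route as the paper: both reduce the lemma to the derivative identity $\langle\Psi'(u),v\rangle=\int_{\mathbb{R}^3}\bigl(\langle\partial_s\hat F(u^+,u^-),v^+\rangle+\langle\partial_t\hat F(u^+,u^-),v^-\rangle\bigr)\,dx$, specialize to $v=u,u^+,u^-$ (using that $P^\pm$ are complementary projections, so the cross terms vanish), and integrate the pointwise assumptions $(F_2)$--$(F_4)$. The paper treats the $\mathcal{C}^1$ regularity and integrability issues more tersely (deferring them to the topological-properties lemma in Section~\ref{sec4}), whereas you flag them explicitly, but the substance of the argument is identical.
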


\begin{proof}
\begin{itemize}
\item[$(1)$] It follows directly from the definition of $\Psi$ and $(F_2)$ that
$$a_1\left(\|u^+\|_{L^p}^p +\|u^-\|_{L^2}^2\right)  \leq \Psi(u) \leq a_2\left(\|u^+\|_{L^p}^p+ \|u^-\|_{L^p}^p+ \|u^-\|_{L^2}^2\right) .$$
\item[$(2)$] It follows directly from $E\cong E^+\oplus E^-$ that
\begin{align*}
\langle \Psi'(u),u\rangle &= \int_{\mathbb{R}^3}\left\langle F_u(u),u\right\rangle dx\\
&=\lim_{h\rightarrow 0} \int_{\mathbb{R}^3} \frac{F((1+h)u)-F(u)}{h}dx\\
&=\lim_{h\rightarrow 0} \int_{\mathbb{R}^3} \frac{\hat{F}((1+h)u^+,(1+h)u^-)-\hat{F}(u^+, u^-)}{h}dx\\
&=\int_{\mathbb{R}^3} \langle \partial_{s}\hat{F}( u^+,u^-),u^+\rangle +\langle \partial_{t} \hat{F}(u^+,u^-),u^-\rangle dx.
\end{align*}
Here, $$\langle \partial_s \hat{F}(u^+,u^-),u^+\rangle:=\lim\limits_{h\rightarrow 0} \frac{\hat{F}((1+h)u^+ ,u^-)-\hat{F}(u^+,u^-)}{h},$$ $$\langle \partial_t \hat{F}(u^+,u^-),u^-\rangle:=\lim\limits_{h\rightarrow 0} \frac{\hat{F}(u^+ ,(1+h)u^-)-\hat{F}(u^+,u^-)}{h}.$$
From $(F_3)$, we obtain
$$  2\Psi(u)+b_1  \|u^+\|_{L^p}^p \leq  \langle\Psi'(u),u\rangle\leq 3\Psi(u)-b_2\left( \|u^+\|_{L^p}^p +\|u^-\|_{L^2}^2\right).$$
\item[$(3)$] Similar to $(2)$, we have
\[\langle\Psi'(u ),u ^+\rangle=\int_{\mathbb{R}^3} \langle \partial_{s}\hat{F}( u^+,u^-),u^+\rangle dx, \]
\[\langle\Psi'(u ),u ^-\rangle=\int_{\mathbb{R}^3} \langle \partial_{t}\hat{F}( u^+,u^-),u^-\rangle dx. \]
Therefore, by $(F_4)$, we have
\[\langle\Psi'(u ),u ^+\rangle\leq c_1 \|u^+\|_{L^p}^p+ d_1\|u^-\|_{L^2}^2 ,\quad
\langle \Psi'(u),u^-\rangle \geq d_2\left(\|u^- \|_{L^2}^2-\|u^+\|_{L^p}^p \right).\]
This ends the proof.
\end{itemize}
\end{proof}

\section{Nonexistence results of (NDE) with large frequencies}\label{sec3}

In this section, we first derive a Pohozaev identity for nonlinear Dirac equations. Then we show the nonexistence result with $\omega$ large enough by using Pohozaev's identity and a variational identity.
In this section, $\langle \cdot, \cdot\rangle$ represents the complex inner product in $\mathbb{C}^4$, $\Re f$ ($\Im f$) represents the real (imaginary) part of $f$.

\begin{proposition}[Pohozaev's identity]
	Under the assumptions of Theorem \ref{mainthm}, if $u\in H^1(\mathbb{R}^3, \mathbb{C}^4)$ is a solution of \eqref{eq1.1},
then $u$ satisfies the following Pohozaev identity:
\begin{equation}\tag{3.1}\label{Pohozaev}
\begin{aligned}
\int_{\mathbb{R}^{3}}\langle-i \alpha \cdot \nabla u, u\rangle dx +\frac{3}{2} \int_{\mathbb{R}^{3}}\left(\left\langle m \beta u, u\right\rangle-\omega|u|^{2}\right) dx
=3\int_{\mathbb{R}^{3}} F(u)  dx.
\end{aligned}
\end{equation}
\end{proposition}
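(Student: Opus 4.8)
The plan is to test \eqref{eq1.1} against the dilation field $x\cdot\nabla u=\sum_{k=1}^3 x_k\partial_k u$ in the Hermitian $\mathbb{C}^4$ pairing, integrate over a ball $B(0,R)$, take real parts, and then let $R\to\infty$. Since $x\cdot\nabla u$ need not lie in $E\cong H^{1/2}$, I would not use the weak formulation but work with the strong (pointwise) equation, which is legitimate because $u\in H^1$ is a strong solution whose regularity can be upgraded by a bootstrap on $Du=\omega u+F_u(u)$. Writing $A=-i\alpha\cdot\nabla$, each of the three linear terms and the nonlinear term is reduced, by integration by parts on $B(0,R)$, to a quantity appearing in \eqref{Pohozaev}, plus a surface integral over $\partial B(0,R)$ that must be shown to vanish along a suitable sequence $R_n\to\infty$.

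\textbf{The linear terms.} For the kinetic term I would use that $A$ is formally self-adjoint and homogeneous of degree one, so it obeys the commutator relation $[A,x\cdot\nabla]=A$. Equivalently, introducing the skew-adjoint generator of dilations $S=x\cdot\nabla+\tfrac32$ on $L^2(\mathbb{R}^3,\mathbb{C}^4)$ and writing $x\cdot\nabla=S-\tfrac32$, one checks $\Re\int\langle Au,Su\rangle=\tfrac12\int\langle Au,u\rangle$, whence
\[
\Re\int_{\mathbb{R}^3}\langle -i\alpha\cdot\nabla u,\,x\cdot\nabla u\rangle\,dx=-\int_{\mathbb{R}^3}\langle -i\alpha\cdot\nabla u,u\rangle\,dx
\]
modulo a boundary term. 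For the mass and frequency terms I would use that $\beta$ is Hermitian, so $\Re\langle\beta u,\partial_l u\rangle=\tfrac12\partial_l\langle\beta u,u\rangle$ and $\Re\langle u,\partial_l u\rangle=\tfrac12\partial_l|u|^2$; the divergence identity $\int_{B(0,R)}x\cdot\nabla g\,dx=\int_{\partial B(0,R)}g\,(x\cdot\nu)\,d\sigma-3\int_{B(0,R)}g\,dx$ then yields the contributions $-\tfrac32\int\langle m\beta u,u\rangle$ and $+\tfrac32\omega\int|u|^2$.

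\textbf{The nonlinear term.} Since $F_u(u)=\nabla F(u)$ and $F$ is real-valued, the pointwise chain rule gives $\Re\langle F_u(u),x\cdot\nabla u\rangle=x\cdot\nabla\big(F(u)\big)$, so integration by parts produces $\Re\int\langle F_u(u),x\cdot\nabla u\rangle=-3\int F(u)\,dx$ up to a boundary term. It is worth noting that the tempting alternative, the scaling identity $\tfrac{d}{dt}\,\mathcal A\big(u(\cdot/t)\big)\big|_{t=1}=0$, is delicate here, because the projectors $P^\pm$ entering $F(u)=\hat F(u^+,u^-)$ do not commute with dilations; the direct computation above uses only the pointwise form $F_u=\nabla F$ of \eqref{eq1.1} and so sidesteps this. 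Collecting the four contributions from testing \eqref{eq1.1}, cancelling signs and rearranging produces exactly \eqref{Pohozaev}.

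\textbf{Main obstacle.} The crux is rigour at infinity. First, one must justify that $u\in H^1$ may be multiplied by $x\cdot\nabla u$ and differentiated, which requires a regularity bootstrap (the equation forces $Du$, and hence $u$, into successively better spaces, giving $u\in H^2_{\mathrm{loc}}$ with enough pointwise control). Second, and more seriously, every integration by parts above leaves a surface integral over $\partial B(0,R)$ of quantities dominated by $|u|^2$, $|\nabla u|^2$ and $|F(u)|$, whereas $H^1$-membership yields decay only in an $L^2$-averaged sense. I would therefore not let $R\to\infty$ arbitrarily, but select a sequence $R_n\to\infty$ along which $\int_{\partial B(0,R_n)}\big(|u|^2+|\nabla u|^2+|F(u)|\big)\,d\sigma\to0$; this is possible because the associated radial integrals are finite, and then all boundary terms vanish in the limit and \eqref{Pohozaev} follows. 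Establishing this decay and the vanishing of the boundary contributions is the main technical difficulty of the proof.
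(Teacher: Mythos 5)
Your proposal follows essentially the same route as the paper's proof: multiply \eqref{eq1.1} by $x\cdot\nabla u$, take real parts, integrate by parts over a ball of radius $R$ (your dilation-generator/commutator identity for $-i\alpha\cdot\nabla$ is just a repackaging of the paper's direct boundary computation), use the pointwise chain rule $\Re\langle F_u(u),x\cdot\nabla u\rangle=x\cdot\nabla F(u)$ for the nonlinear term exactly as the paper does, and dispose of the surface integrals as $R\to\infty$. On the last step you are in fact more careful than the paper, which simply asserts that the boundary terms vanish because $u\in H^1(\mathbb{R}^3,\mathbb{C}^4)$; you correctly note that $H^1$-membership only gives averaged decay, so one must pass to a well-chosen sequence of radii.

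There is, however, one quantitative slip in your treatment of the boundary terms. The surface integrands are \emph{not} dominated by $|u|^2+|\nabla u|^2+|F(u)|$: every boundary term produced by the integrations by parts carries an explicit factor $R$, coming from $x\cdot n=R$ on $\partial B_R$ and from $|x\cdot\nabla u|\le R|\nabla u|$. This is visible in the paper's estimate, where the remainder is bounded by $RC\int_{\partial B_R}\left(|\nabla u||u|+|u|^2+|u^+|^p+|u^-|^p\right)dS$. Consequently, choosing $R_n\to\infty$ so that $\int_{\partial B_{R_n}}\left(|u|^2+|\nabla u|^2+|F(u)|\right)d\sigma\to 0$, as you propose, does not suffice: along such a sequence the product with $R_n$ need not tend to zero. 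The fix is the same argument you already invoke, applied to the correct quantity: setting $h(R)=\int_{\partial B_R}\left(|u|^2+|\nabla u|^2+|F(u)|\right)dS$, integrability gives $\int_0^\infty h(R)\,dR<\infty$; if one had $R\,h(R)\ge\varepsilon$ for all large $R$, then $h(R)\ge \varepsilon/R$ would contradict this finiteness, so $\liminf_{R\to\infty}R\,h(R)=0$ and a sequence with $R_n h(R_n)\to 0$ exists. With the selection criterion strengthened in this way, your argument closes and coincides with the paper's proof.
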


\begin{proof}
Taking the scalar product of \eqref{eq1.1} with $x\cdot \nabla u,$ we obtain for all $R>0$,
\begin{equation*}
\Re\int_{B_R}\langle -i \alpha\cdot \nabla u, x\cdot \nabla u\rangle dx=\Re\int_{B_R}\langle -m \beta u+\omega u+F_u(u), x\cdot \nabla u\rangle dx,
\end{equation*}
where $B_R:=\{x\in \mathbb{R}^3:|x|<R\}$, $\partial B_R:=\{x\in \mathbb{R}^3:|x|=R\}$.
We denote the outward unit vector to $\partial B_R$ by $n=(n_1,n_2,n_3)$. Integrating by parts in $B_R$, we obtain
\begin{equation*}\label{V}
\begin{aligned}
&\Re\int_{B_{R}}  \langle m\beta u,x \cdot \nabla u\rangle d x-\Re\int_{B_{R}} \langle  \omega  u,x \cdot \nabla u\rangle dx
\\
=&\frac{1}{2} \int_{B_{R}}  m  \sum_{i=1}^{3} x_{i}\frac{\partial}{\partial x_{i}}\langle \beta u, u\rangle d x
-\frac{1}{2} \int_{B_{R}}\omega \sum_{i=1}^{3} x_{i}\frac{\partial}{\partial x_{i}}\left(|u|^{2}\right) d x
\\
=&\frac{1}{2} \int_{\partial B_{R}}\left(m\langle \beta u, u\rangle -\omega  |u|^{2} \right) \sum_{i=1}^{3} x_{i} n_{i} d S
-\frac{3}{2} \int_{B_{R}} \left(m\langle \beta u, u\rangle-\omega \left|u\right|^{2} \right)d x
\\	=&\frac{ R}{2} \int_{\partial B_{R}}\left( m  \langle \beta u, u\rangle-\omega |u|^{2} \right)  d S -\frac{3}{2} \int_{B_{R}} \left( m \langle \beta u, u\rangle -\omega |u|^{2}\right) d x.
	\end{aligned}
\end{equation*}
Similarly,
\begin{equation*}\label{W}
\begin{aligned}	\Re\int_{B_{R}} \langle F_u( u),  x \cdot \nabla  u\rangle d x
=&\sum_{i=1}^{3}\Re\int_{B_{R}}  \langle F_u( u), x_{i} \frac{\partial}{\partial x_{i}}u \rangle d x\\
=&\sum_{i=1}^{3}\int_{B_{R}}  \sum_{i=1}^3 x_i\frac{\partial}{\partial x_i} F(u) d x\\
=& \int_{\partial B_{R}}  F(u)\sum\limits_{i=1}^3x_in_idS-3\int_{B_{R}} F ( u) dx \\
=&\int_{\partial B_{R}} F(u) RdS-3\int_{B_{R}}F(u) dx
	\end{aligned}
\end{equation*}
Moreover,
\begin{equation*}\label{D}
	\begin{aligned}
		& \Re\int_{B_{R}}\langle -i \alpha\cdot \nabla u,  x \cdot \nabla u\rangle d x\\
		=&\frac{1}{2} \Re\int_{\partial B_{R}}\left(\langle -i \alpha\cdot \nabla u,  u\rangle  R + \langle  x \cdot \nabla u, -i \alpha\cdot n u\rangle \right)d S-\int_{B_{R}}\langle -i \alpha\cdot \nabla u,  u\rangle d x
	\end{aligned}
\end{equation*}
Therefore, since $\langle \beta u, u\rangle, |u|\in \mathbb{R}$,  then we obtain
\begin{equation*}
	\begin{aligned}
		&\left|\int_{B_{R}}\langle-i \alpha \cdot \nabla u, u\rangle d x +\frac{3}{2} \int_{B_{R}}\left(\left\langle m   \beta u, u\right\rangle-\omega|u|^{2}\right) d x-3 \int_{B_{R}} F(u) d x    \right|\\
		=& \bigg|\frac{1}{2}\Re \int_{\partial B_{R}}\left(\langle -i \alpha\cdot \nabla u,  u\rangle R+ \langle  x \cdot \nabla u, -i \alpha\cdot n u\rangle \right)d S -  \int_{\partial B_{R}}F(u) R d S\\
		 &+\frac{R}{2} \int_{\partial B_{R}}\left( m  \langle \beta u, u\rangle-  \omega  |u|^{2} \right)  d S \bigg|\\
		\leq &  RC \int_{\partial B_{R}}\left(|\nabla u||u|+|u|^2+|u^+|^p+|u^-|^p\right)dS,
	\end{aligned}
\end{equation*}
where $C>0$ is a constant.
Then, letting $R\rightarrow \infty$, since $u\in H^1(\mathbb{R}^3, \mathbb{C}^4)$, the last term of the above inequality vanishes. This ends the proof.
\end{proof}
\medskip

 The above Pohozaev identity can be used to show the nonexistence results for the nonlinear Dirac equations.
Next, we show the proof of Theorem \ref{mainthm} (1).
\medskip
\\
{\bf Proof of Theorem \ref{mainthm} (1).}
	We argue by contradiction and suppose that there exists $u\neq 0$ satisfying \eqref{eq1.1} in $H^1(\mathbb{R}^3, \mathbb{C}^4)$.
	Then, 	
	\begin{equation*}
		\int_{\mathbb{R}^{3}}\left\langle-i \alpha\cdot \nabla u,u \right\rangle dx=\int_{\mathbb{R}^{3}}\left( -\langle m \beta u,u\rangle+\omega |u|^2  +\langle F_u(u),u\rangle\right)dx,
	\end{equation*}
which, together with Pohozaev's identity \eqref{Pohozaev}, implies that
	\begin{equation*}
		\begin{aligned}
			 \frac{1}{2}\int_{\mathbb{R}^{3}}\left( \langle m \beta u,u\rangle-\omega |u|^2\right)dx=  3\int_{\mathbb{R}^{3}}F(u)dx -\int_{\mathbb{R}^{3}}\left\langle F_u( u) ,u\right\rangle dx.
		\end{aligned}
	\end{equation*}
	By the assumption that $\omega\geq m$, we have
 \begin{align*}
     \int_{\mathbb{R}^{3}}\left( \langle m \beta u,u\rangle-\omega |u|^2\right)dx=\int_{\mathbb{R}^{3}}\left( (m-\omega)\sum_{k=1}^2|u_k|^2-(m+\omega)\sum_{k=3}^4|u_k|^2\right)dx \leq 0.
 \end{align*}
It follows directly that
 \begin{equation*}
 		\begin{aligned}
3\Psi(u)-\langle \Psi'(u),u\rangle
 		\leq   0.
 	\end{aligned}
 \end{equation*}
Then, Lemma \ref{lem2.4} (2) implies that
\[\|u^+\|_{L^p}^p+\|u^-\|_{L^2}^2=0.\]
Therefore, we have $u =0$ in $H^1(\mathbb{R}^3,\mathbb{C}^4)$, which is a contradiction. $\hfill\Box$

\section{Existence results of (NDE) in spectral gap}\label{sec4}
In this section, we show the existence results of \eqref{eq1.1} when $\omega\in (-m,m)$.
Recall that $D=-i\alpha\cdot \nabla+m\beta$, and $E=\mathscr{D}(|D|^{1/2})$ is the domain of the self-adjoint operator $|D|^{1/2}$, which is a real Hilbert space equipped with the real inner product
\[(u,v)=\Re(|D|^{1/2}u,|D|^{1/2}v)_{L^2},\]
and the induced norm $\|u\|=(u,u)^{1/2}$. By using the Fourier transform, we have
$$
\frac{m-|\omega|}{m}\left\|u^{+}\right\|^2 \leq \left(\left\|u^{+}\right\|^2 \pm \omega\left\|u^{+}\right\|_{L^2}^2\right) \leq  \frac{m+|\omega|}{m}\left\|u^{+}\right\|^2,
$$
and
$$
\frac{m-|\omega|}{m}\left\|u^{-}\right\|^2 \leq \left(\left\|u^{-}\right\|^2 \pm \omega\left\|u^{-}\right\|_{L^2}^2\right) \leq \frac{m+|\omega|}{m}\left\|u^{-}\right\|^2 .
$$
Now we introduce a functional on $E$ which is defined by
\[\Phi_\omega(u)=\frac{1}{2}\left(\|u^+\|^2-\|u^-\|^2\right)-\frac{\omega}{2}\|u\|_{L^2}^2-\Psi(u),\]
where $\Psi(u)=\displaystyle\int_{\mathbb{R}^3} F( u(x))dx$. Thus, the existence of weak solutions of \eqref{eq1.1} is equivalent to the existence of critical points of $\Phi_\omega$ on $E$. To find the critical point of $\Phi_\omega$, we need to use the critical point theorem of strong indefinite type. First, we show that the functional $\Phi_\omega\in \mathcal{C}^1(E,\mathbb{R})$ possesses the following topological properties and geometric structures.
\par \bigskip

\noindent{\bf $*$ Topological Properties of $\Phi_\omega$.}

\begin{lemma}\label{psi}
Under the assumptions of Theorem \ref{mainthm}, the following conclusions hold:
\begin{itemize}
\item[(1)] $\Psi\in \mathcal{C}^1(E,\mathbb{R} )$ is bounded from below,
\item[(2)] $\Psi$ is weakly sequentially lower-semicontinuous,
\item[(3)] $\Psi'$ is weakly sequentially continuous.
\end{itemize}
\end{lemma}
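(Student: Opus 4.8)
The plan is to regard $\Psi$ as the composition of the bounded linear map $u \mapsto (u^+,u^-)$ with the Nemytskii operator generated by $\hat F$, exploiting throughout the two features of the working space recorded in Lemma \ref{lem2.1}: the continuous embedding $E \hookrightarrow L^q$ for $q \in [2,3]$ and the compact embedding $E \hookrightarrow L^q_{loc}$ for $q \in [1,3)$. For part (1), boundedness from below is immediate, since Lemma \ref{lem2.4}(1) gives $\Psi(u) \geq a_1(\|u^+\|_{L^p}^p + \|u^-\|_{L^2}^2) \geq 0$. For the $\mathcal{C}^1$ statement, a preliminary step is to extract from $(F_1)$--$(F_4)$ pointwise growth bounds on the gradients, of the schematic form $|\partial_s \hat F(s,t)| \le C(|s|^{p-1} + |t|)$ and $|\partial_t \hat F(s,t)| \le C(|s|^{p-1} + |t|^{p-1} + |t|)$, so that, with $p \in (2,3)$, the operator $\partial_s \hat F$ maps $L^p \times L^2$ continuously into $L^{p'}$ and $\partial_t \hat F$ into $L^{p'}+L^2$. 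Composing with the continuous embeddings then makes $u \mapsto \hat F(u^+,u^-)$ a $\mathcal{C}^1$ Nemytskii map whose G\^ateaux derivative is the formula already computed in the proof of Lemma \ref{lem2.4}; continuity of $u \mapsto \Psi'(u)$ from $E$ into $E^*$ follows from the continuity of these superposition operators, upgrading G\^ateaux to Fr\'echet differentiability.

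For part (2), let $u_n \rightharpoonup u$ in $E$. Since $P^\pm$ are bounded, $u_n^\pm \rightharpoonup u^\pm$, and Lemma \ref{lem2.1} yields, after passing to a subsequence, $u_n^\pm \to u^\pm$ in $L^q_{loc}$ and a.e. on $\mathbb{R}^3$. The decisive observation is that $\hat F \geq 0$ by $(F_2)$. To prove lower semicontinuity I would select a subsequence along which $\Psi(u_{n_k})$ tends to $\liminf_n \Psi(u_n)$, pass to a further subsequence with a.e. convergence of $u_{n_k}^\pm$, and apply Fatou's lemma with the continuity of $\hat F$ to obtain $\Psi(u) = \int_{\mathbb{R}^3} \hat F(u^+,u^-)\,dx \leq \liminf_k \int_{\mathbb{R}^3} \hat F(u_{n_k}^+, u_{n_k}^-)\,dx = \liminf_n \Psi(u_n)$. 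This is the routine part.

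Part (3) is where the genuine difficulty lies, precisely because the functional lives on all of $\mathbb{R}^3$ while the embedding is only locally compact. Fix $v \in E$ and suppose $u_n \rightharpoonup u$; I want $\langle \Psi'(u_n) - \Psi'(u), v\rangle \to 0$, the pairing being the integral of $\langle \partial_s \hat F(u_n^+,u_n^-) - \partial_s \hat F(u^+,u^-), v^+\rangle + \langle \partial_t \hat F(u_n^+,u_n^-) - \partial_t \hat F(u^+,u^-), v^-\rangle$. The strategy is a cut-off at radius $R$. On the tail $\{|x|>R\}$ I would use the gradient growth bounds together with H\"older's inequality and the uniform $E$-bound (hence uniform $L^p$- and $L^2$-bound) on $\{u_n\}$, so that the tail integral is controlled by $\|v^+\|_{L^p(|x|>R)} + \|v^-\|_{L^p(|x|>R)} + \|v^-\|_{L^2(|x|>R)}$; since $v$ is fixed and lies in $L^p \cap L^2$ by Lemma \ref{lem2.1}, these tails fall below any prescribed $\varepsilon$ for $R$ large, uniformly in $n$. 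On the ball $B_R$ I would use the compact embedding to get $u_n^\pm \to u^\pm$ in $L^q(B_R)$ and a.e., and then the continuity of the Nemytskii operators $\partial_s \hat F$, $\partial_t \hat F$ on the bounded domain (via a Vitali/dominated-convergence argument exploiting the growth bounds) to conclude that the $B_R$-part tends to zero. Combining the two regions gives $\limsup_n |\langle \Psi'(u_n) - \Psi'(u), v\rangle| \le C\varepsilon$, and since $\varepsilon$ is arbitrary and the candidate limit $\langle \Psi'(u), v\rangle$ is independent of the chosen subsequence, the whole sequence converges. The main obstacle is therefore the uniform control of the tails: it is essential that the mixed $L^p$/$L^2$ structure of the derivative bounds matches exactly the integrability of the fixed test function $v$ supplied by $E \hookrightarrow L^p \cap L^2$, so that the noncompactness at infinity is absorbed by the decay of $v$ rather than by any compactness of $\{u_n\}$ itself.
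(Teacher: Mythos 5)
Your parts (1) and (2) coincide in substance with the paper's own proof: the lower bound is exactly Lemma~\ref{lem2.4}(1), and weak sequential lower semicontinuity is Fatou's lemma applied along a subsequence with a.e.\ convergence (the paper is terser and omits the subsequence extraction you spell out, but the mechanism is identical). Where you genuinely diverge is part (3). The paper tests $\Psi'(u_k)-\Psi'(u)$ only against $\phi\in\mathcal{C}_0^\infty(\mathbb{R}^3,\mathbb{C}^4)$, so the integrand lives on a compact set and dominated convergence applies; weak* convergence against all of $E$ then rests on the density of $\mathcal{C}_0^\infty$ in $E$ together with the uniform boundedness of $\{\Psi'(u_k)\}$ in $E^*$ (a step the paper leaves implicit). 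You instead take an arbitrary $v\in E$ and split $\mathbb{R}^3$ into $B_R$ and its complement, killing the tail uniformly in $n$ via H\"older and the decay of $\|v^\pm\|_{L^p(|x|>R)}$, $\|v^-\|_{L^2(|x|>R)}$, and using local compactness plus Nemytskii continuity on $B_R$. Your route is more self-contained, since it never invokes density or the boundedness of the derivatives as functionals; the paper's route is shorter but hides those two ingredients. One caveat applies to both proofs equally: the pointwise gradient bounds $|\partial_s \hat F(s,t)|\leq C\left(|s|^{p-1}+|t|\right)$ and $|\partial_t \hat F(s,t)|\leq C\left(|s|^{p-1}+|t|^{p-1}+|t|\right)$ that you propose to ``extract'' from $(F_1)$--$(F_4)$ do not in fact follow from those assumptions, which control only the radial derivatives $\langle \partial_s\hat F,s\rangle$ and $\langle \partial_t\hat F,t\rangle$; the paper silently assumes such bounds as well (they are needed for the assertion that $\Psi\in\mathcal{C}^1(E,\mathbb{R})$ ``is clear,'' for its dominated-convergence step, and for the later bootstrap), so this is a shared gap in the hypotheses rather than a defect of your argument relative to the paper's --- but you should state these bounds as an explicit standing assumption rather than claim they are consequences of $(F_1)$--$(F_4)$.
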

\begin{proof}
\begin{itemize}
\item[(1)] It is clear that $\Psi\in \mathcal{C}^1(E,\mathbb{R})$. By Lemma \ref{lem2.4} (1), we have for every $u\in E$,
\[\Psi(u)\geq a_1 \left(\|u^+\|_{L^p}^p+\|u^-\|_{L^2}^2\right)\geq 0.\]
Thus, $\Psi$ is bounded from below.
\item[(2)] Let $u_k\in E$ with $u_k\rightharpoonup u$ in $E$. Hence, $u_k^+\rightharpoonup u^+$, $u_k^-\rightharpoonup u^-$ in $E$. Thus, for almost every $x\in \mathbb{R}^3$, we have
$ F(u_k(x))\rightarrow F (u(x)).$
By Fatou's lemma, we get
\begin{align*}
\Psi(u)=\int_{\mathbb{R}^3} F(u(x))dx\leq \liminf_{k\rightarrow \infty} \int_{\mathbb{R}^3} F(u_k(x)) dx,
\end{align*}
which implies that $\Psi$ is weakly sequentially lower-semicontinuous.
\item[(3)] It is clear that for any $\phi\in \mathcal{C}_0^\infty(\mathbb{R}^3,\mathbb{C}^4)$, we have
\begin{align*}
\langle \Psi'(u),\phi\rangle= \Re \int_{\mathbb{R}^3} \langle F_u(u),\phi\rangle  dx.
\end{align*}
The Lebesgue dominated convergence theorem yields $$\langle \Psi'(u_k),\phi\rangle\rightarrow \langle \Psi'(u),\phi\rangle,\quad \text{for any }\ \phi\in \mathcal{C}_0^\infty(\mathbb{R}^3,\mathbb{C}^4),$$
which implies that $\Psi'$ is weakly sequentially continuous.
\end{itemize}
\end{proof}

For a functional $\Phi_\omega \in \mathcal{C}^1(E, \mathbb{R})$, we write $$\Phi_{ a}:=\{u \in E: \Phi_\omega (u) \geq  a\},\quad  \Phi^b:=\{u \in E: \Phi_\omega(u) \leq b\}, \quad \text{and}\quad \Phi_a^b:=\Phi_a \cap \Phi^b.$$
Recall that a sequence $\left\{u_k\right\}\subset E$ is said to be a $(P S)_c$-sequence if $\Phi_\omega\left(u_k\right) \rightarrow c$ and $\Phi_\omega^{\prime}\left(u_k\right) \rightarrow 0$.
It is called a $(C)_{c}$-sequence if $\Phi_\omega\left(u_k\right) \rightarrow c$ and $\left(1+\left\|u_k\right\|\right) \Phi_\omega^{\prime}\left(u_k\right) \rightarrow 0$.
The functional $\Phi$ is said to satisfy the $(P S)_c$-condition (or $(C)_c$-condition) if any $(P S)_c$-sequence (or $(C)_c$-sequence, respectively) has a convergent subsequence.
Set $X=E^-$, $Y=E^+$. It is clear that $X$ is separable and reflexive, and we fix a dense subset $\mathcal{S} \subset X^*$. For each $s \in \mathcal{S}$ there is a semi-norm on $E$ defined by
$$
p_s: E \rightarrow \mathbb{R}, \quad p_s(u)=|s(x)|+\|y\| \quad \text { for } u=x+y \in X \oplus Y .
$$
We denote by $\mathcal{T}_{\mathcal{S}}$ the induced topology. Let $w^*$ denote the weak*-topology on $E^*$.

\begin{lemma}\label{topo} Under the assumptions of Theorem \ref{mainthm}, the following conclusion holds:
\begin{itemize}
\item[(1)] $\Phi_c$ is $\mathcal{T}_{\mathcal{S}}$-closed and $\Phi_\omega':(\Phi_c,\mathcal{T}_{\mathcal{S}})\rightarrow (E^*,w^*)$ is continuous.
\item[(2)] There is $\zeta >0$, such that for any $c>0$,
\[\|u\|<\zeta \|u^+\|,\quad \forall u\in \Phi_c.\]
\end{itemize}
\end{lemma}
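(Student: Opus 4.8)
The two assertions are logically linked, so the plan is to prove the coercivity bound (2) first and then feed it into the topological statement (1). Writing $u=u^++u^-$ and using that $E^+$ and $E^-$ are orthogonal in both $(\cdot,\cdot)$ and $(\cdot,\cdot)_{L^2}$, I record
\[
\Phi_\omega(u)=\tfrac12\bigl(\|u^+\|^2-\omega\|u^+\|_{L^2}^2\bigr)-\tfrac12\bigl(\|u^-\|^2+\omega\|u^-\|_{L^2}^2\bigr)-\Psi(u).
\]
For $u\in\Phi_c$ with $c>0$, the facts that $\Psi\ge 0$ (Lemma \ref{lem2.4}(1)) and $\|u^-\|^2+\omega\|u^-\|_{L^2}^2\ge\frac{m-|\omega|}{m}\|u^-\|^2$ force $\tfrac12\bigl(\|u^+\|^2-\omega\|u^+\|_{L^2}^2\bigr)>\tfrac{m-|\omega|}{2m}\|u^-\|^2$; combining this with the upper bound $\|u^+\|^2-\omega\|u^+\|_{L^2}^2\le\frac{m+|\omega|}{m}\|u^+\|^2$ (both estimates are displayed at the start of Section \ref{sec4}) yields $\|u^-\|^2<\frac{m+|\omega|}{m-|\omega|}\|u^+\|^2$, hence $\|u\|^2<\frac{2m}{m-|\omega|}\|u^+\|^2$. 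This is (2) with $\zeta=\sqrt{2m/(m-|\omega|)}$, which is manifestly independent of $c$.

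For (1), I would first unwind what $\mathcal{T}_{\mathcal{S}}$-convergence means along a sequence $\{u_n\}\subset\Phi_c$ with $u_n\to u$: by the definition of the seminorms $p_s$ it gives $u_n^+\to u^+$ in the norm of $E^+$ together with $s(u_n^-)\to s(u^-)$ for every $s\in\mathcal{S}$. Here (2) does the essential work: since $\|u_n^+\|$ converges it is bounded, and $\|u_n^-\|\le\|u_n\|<\zeta\|u_n^+\|$ shows that $\{u_n^-\}$ is bounded in $E^-$. As $\mathcal{S}$ is dense in $(E^-)^*$, boundedness upgrades the relations $s(u_n^-)\to s(u^-)$ to weak convergence $u_n^-\rightharpoonup u^-$ in $E^-$, and therefore $u_n\rightharpoonup u$ weakly in $E$.

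With this decoupling in hand, closedness is an upper-semicontinuity computation: $\|u_n^+\|^2\to\|u^+\|^2$; weak lower semicontinuity of the equivalent norm $\bigl(\|\cdot\|^2+\omega\|\cdot\|_{L^2}^2\bigr)^{1/2}$ on $E^-$ gives $\limsup\bigl(-\tfrac12\|u_n^-\|^2-\tfrac\omega2\|u_n^-\|_{L^2}^2\bigr)\le-\tfrac12\|u^-\|^2-\tfrac\omega2\|u^-\|_{L^2}^2$; and Lemma \ref{psi}(2) gives $\limsup(-\Psi(u_n))\le-\Psi(u)$. Summing, $\Phi_\omega(u)\ge\limsup\Phi_\omega(u_n)\ge c$, so $u\in\Phi_c$. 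For the continuity of $\Phi_\omega'$ into $(E^*,w^*)$ I would test against a fixed $v\in E$: the terms $(u_n^+,v^+)$, $(u_n^-,v^-)$ and $\omega(u_n,v)_{L^2}$ all converge by strong/weak convergence and the continuity of $E\hookrightarrow L^2$ (Lemma \ref{lem2.1}), while $\langle\Psi'(u_n),v\rangle\to\langle\Psi'(u),v\rangle$ is precisely the weak sequential continuity of $\Psi'$ (Lemma \ref{psi}(3)); hence $\langle\Phi_\omega'(u_n),v\rangle\to\langle\Phi_\omega'(u),v\rangle$ for every $v$.

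The one genuinely delicate point — and the step I expect to be the main obstacle — is that $\mathcal{T}_{\mathcal{S}}$ is not first countable, so sequential closedness does not by itself give topological closedness. I would handle this as in Kryszewski--Szulkin \cite{Kryszewski}: for a net $\{u_\alpha\}\subset\Phi_c$ with $u_\alpha\to u$ in $\mathcal{T}_{\mathcal{S}}$, norm-convergence of $u_\alpha^+$ bounds $\|u_\alpha^+\|\le\|u^+\|+1$ eventually, and then (2) confines the tail of the net to the ball $\bar B_{\zeta(\|u^+\|+1)}$. On norm-bounded sets $\mathcal{T}_{\mathcal{S}}$ is metrizable — the weak topology on bounded subsets of the separable reflexive space $E^-$ is metrizable and the $E^+$-factor carries the metrizable norm topology — so there the space is sequential and the sequential arguments above apply verbatim. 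This reduction to a bounded set, powered by (2), is the crux; everything else is bookkeeping built on Lemmas \ref{lem2.1} and \ref{psi}.
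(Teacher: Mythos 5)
Your proof is correct, and it is genuinely more self-contained than the paper's. For part (2) the paper argues by contradiction: assuming for some $c>0$ a sequence $\{u_k\}\subset\Phi_c$ with $\|u_k\|^2\geq k\|u_k^+\|^2$, it deduces $\Phi_\omega(u_k)\leq 0$, contradicting $\Phi_\omega(u_k)\geq c>0$. Your contrapositive computation uses exactly the same two ingredients ($\Psi\geq 0$ and the norm equivalences $\tfrac{m-|\omega|}{m}\|u^\pm\|^2\leq\|u^\pm\|^2\mp\omega\|u^\pm\|_{L^2}^2\leq\tfrac{m+|\omega|}{m}\|u^\pm\|^2$ from the start of Section \ref{sec4}), but it is cleaner and quantitative, producing the explicit constant $\zeta=\sqrt{2m/(m-|\omega|)}$; it also keeps the two distinct constants $\tfrac{m\pm|\omega|}{2m}$ separate, where the paper's display collapses them into a single $C$ in a way that only works after taking $k$ large relative to $\tfrac{m+|\omega|}{m-|\omega|}$. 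For part (1) the paper does not argue at all: it cites Lemma \ref{psi} together with Proposition 4.1 of the Bartsch--Ding reference \cite{MR2255874}. What you wrote is essentially a reconstruction of the proof of that cited proposition: split convergence ($u_n^+$ in norm, $u_n^-$ weakly after upgrading the $\mathcal{S}$-convergence by density and boundedness), semicontinuity of the three pieces of $\Phi_\omega$, weak-* sequential convergence of $\Phi_\omega'$, and the reduction of nets to sequences by confining tails to norm-bounded sets on which $\mathcal{T}_{\mathcal{S}}$ is metrizable. Your route buys self-containedness, and it also verifies a step the citation glosses over: the cited proposition is stated for functionals of the form $\tfrac12(\|u^+\|^2-\|u^-\|^2)-\Psi(u)$, so invoking it here tacitly requires renorming $E^\pm$ by the equivalent norms $(\|u^\pm\|^2\mp\omega\|u^\pm\|_{L^2}^2)^{1/2}$, which your direct handling of the quadratic form makes explicit. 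One difference worth flagging: your argument for (1) feeds on (2) and therefore covers only levels $c>0$, whereas the cited proposition yields $\mathcal{T}_{\mathcal{S}}$-closedness of $\Phi_a$ for every $a$ using only that $\Psi$ is bounded below (on $\Phi_a$ one has $\|u^-\|^2+\omega\|u^-\|_{L^2}^2\leq\|u^+\|^2-\omega\|u^+\|_{L^2}^2-2a$, bounding $\|u^-\|$ in terms of $\|u^+\|$ and $a$); since the lemma is only used at levels $c\geq\kappa>0$ in the existence proof, this restriction is harmless here.
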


\begin{proof}
\begin{itemize}
\item[(1)] It follows directly from Lemma \ref{psi} and Proposition 4.1 in \cite{MR2255874}.

\item[(2)]  Assume by contradiction that for some $c>0$, there exists a sequence $\left\{u_k\right\} \subset$ $\Phi_c$, and $\left\|u_k\right\|^2 \geq k\left\|u_k^{+}\right\|^2$.
It follows directly that
$$0 \geq(2-k)\left\|u_k^{-}\right\|^2 \geq(k-1)\left(\left\|u_k^{+}\right\|^2-\left\|u_k^{-}\right\|^2\right).$$
Therefore, we get
$$
\Phi_\omega\left(u_k\right)=\frac{1}{2}\left(\|u_k^+\|^2-\|u_k^-\|^2\right)+\frac{\omega}{2}\|u_k\|_{L^2}^2-\Psi(u_k) \leq C\left(\|u_k^+\|^2-\|u_k^-\|^2\right)-\Psi(u_k)\leq 0,
$$
which is a contradiction.
\end{itemize}
\end{proof}
\noindent{\bf $*$ Geometric Structures of $\Phi_\omega$.}
\medskip

The existence of critical points depends on the local geometric structure of the functional. By proving the following Linking lemma, we can further construct a $(C)_c$-sequence.
\begin{lemma}\label{geo}
Under the assumptions of Theorem \ref{mainthm}, the following conclusion holds:
\begin{itemize}
\item[(1)] There exists $\rho$, such that $\kappa:=\inf \Phi_\omega  (\partial B_\rho\cap E^+)>0$.
\item[(2)] For some $e\in E^+$, with $\|e\|=1$, there exists $R$, $\sigma>0$ (independent of $\lambda$), such that
\[\sup\Phi_\omega(E_e)<\sigma,\quad \sup\Phi_\omega(E_e\setminus B_R)\leq 0,\]
where $B_R:=\{u\in E_e: \|u\|\leq R\}$, $E_e:=E^-\oplus \mathbb{R}e$.
\end{itemize}
\end{lemma}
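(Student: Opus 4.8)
The plan is to verify the two ingredients of a linking geometry for the strongly indefinite functional $\Phi_\omega$: a uniform positive lower bound on a small sphere in the positive space $E^+$ (part (1)), and an upper bound together with decay at infinity on the linking subspace $E_e=E^-\oplus\mathbb{R}e$ (part (2)). Both rest on exactly two tools already available: the Fourier-transform spectral estimates displayed just before the definition of $\Phi_\omega$, which sandwich the quadratic pieces $\tfrac12(\|u^+\|^2-\omega\|u^+\|_{L^2}^2)$ and $\tfrac12(\|u^-\|^2+\omega\|u^-\|_{L^2}^2)$ between $\tfrac{m-|\omega|}{2m}\|u^\pm\|^2$ and $\tfrac{m+|\omega|}{2m}\|u^\pm\|^2$; and the two-sided control of $\Psi$ from Lemma \ref{lem2.4}(1). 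The fact that $p>2$, so that $\Psi$ is superquadratic in the $E^+$-variable while the indefinite quadratic form is quadratic, is what drives both estimates.

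For part (1) I would restrict to $u=u^+\in E^+$, so $u^-=0$. The lower spectral estimate gives $\tfrac12(\|u^+\|^2-\omega\|u^+\|_{L^2}^2)\geq\tfrac{m-|\omega|}{2m}\|u^+\|^2$, while Lemma \ref{lem2.4}(1) with $u^-=0$ yields $\Psi(u)\leq a_2\|u^+\|_{L^p}^p$, and the continuous embedding $E\hookrightarrow L^p$ of Lemma \ref{lem2.1} converts this to $\Psi(u)\leq C\|u^+\|^p$. Hence $\Phi_\omega(u)\geq\tfrac{m-|\omega|}{2m}\|u^+\|^2-C\|u^+\|^p$; since $p>2$, choosing $\rho>0$ small enough that $\tfrac{m-|\omega|}{2m}\rho^2>C\rho^p$ makes $\kappa:=\inf\Phi_\omega(\partial B_\rho\cap E^+)>0$.

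For part (2) I would write a generic element of $E_e$ as $u=u^-+se$ with $u^-\in E^-$, $s\in\mathbb{R}$, so that $u^+=se$ and $\|u^+\|=|s|$. The upper spectral estimate bounds the positive quadratic part by $\tfrac{m+|\omega|}{2m}s^2$, the lower one gives $-\tfrac12(\|u^-\|^2+\omega\|u^-\|_{L^2}^2)\leq-\tfrac{m-|\omega|}{2m}\|u^-\|^2$, and Lemma \ref{lem2.4}(1) gives $\Psi(u)\geq a_1\|u^+\|_{L^p}^p=a_1\|e\|_{L^p}^p|s|^p$. Combining,
\[
\Phi_\omega(u)\leq\Big(\tfrac{m+|\omega|}{2m}s^2-a_1\|e\|_{L^p}^p\,|s|^p\Big)-\tfrac{m-|\omega|}{2m}\|u^-\|^2=:g(s)-c\|u^-\|^2,
\]
with $c=\tfrac{m-|\omega|}{2m}>0$. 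Because $p>2$ and $\|e\|_{L^p}>0$, the one-variable function $g$ is bounded above by some finite $M$ and satisfies $g(s)\to-\infty$ as $|s|\to\infty$; taking $\sigma>M$ gives $\sup\Phi_\omega(E_e)<\sigma$. For the decay, pick $s_0$ with $g(s)\leq0$ for $|s|\geq s_0$: if $|s|\geq s_0$ both terms are nonpositive, while if $|s|<s_0$ then $\Phi_\omega(u)\leq M-c\|u^-\|^2\leq0$ once $\|u^-\|^2\geq M/c$. Hence for $R^2>s_0^2+M/c$ we obtain $\sup\Phi_\omega(E_e\setminus B_R)\leq0$.

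The main obstacle is part (2): $E_e$ is infinite-dimensional, so I must ensure $\Phi_\omega\to-\infty$ uniformly as $\|u\|\to\infty$ within it, not merely along rays. The decomposition above handles this because the coercive negative quadratic term $-c\|u^-\|^2$ controls the entire infinite-dimensional $E^-$ direction, while the single positive direction $\mathbb{R}e$ is tamed by the superquadratic bound $-a_1\|e\|_{L^p}^p|s|^p$, which absorbs the only genuinely positive contribution $\tfrac{m+|\omega|}{2m}s^2$ precisely because $p>2$. Two points deserve care: first, that the discarded cross contributions in $\Psi$ cause no trouble, which holds since $\Psi\geq0$ is used only through its lower bound; and second, that the constants $\rho,\kappa,R,\sigma$ depend solely on $m,\omega,a_1,a_2,p$ and the embedding constant, and in particular not on the perturbation parameter $\lambda$ to be introduced later, which is exactly the uniformity the subsequent linking argument will require.
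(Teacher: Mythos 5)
Your proof is correct and takes essentially the same route as the paper: for (1) it restricts to $E^+$, uses the spectral estimate $\frac{1}{2}\left(\|u\|^2-\omega\|u\|_{L^2}^2\right)\geq \frac{m-|\omega|}{2m}\|u\|^2$ together with $\Psi(u)\leq C\|u\|^p$ and $p>2$; for (2) it writes $u=u^-+se$ on $E_e$ and uses the lower bound of Lemma~\ref{lem2.4}(1) to beat the quadratic part. If anything, your version is more careful than the paper's at the final step: the paper discards the negative term in $u^-$ and bounds $\Phi_\omega$ by $C(t^2-t^p)$, merely asserting that a large $R$ works, whereas your retained term $-c\|u^-\|^2$ is exactly what justifies $\sup\Phi_\omega(E_e\setminus B_R)\leq 0$ on the infinite-dimensional part of $E_e$.
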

\begin{proof}
\begin{itemize}
\item[(1)] For any $u\in E^+$, we have
\[\Phi_\omega (u)=\frac{1}{2}\|u\|^2-\frac{\omega}{2}\|u\|_{L^2}^2-\Psi(u),\]
By the assumptions on $\Psi$, we have
$$\Psi(u)\leq a_2 \|u\|_{L^p}^p \leq C\|u\|^p.$$ Thus, we have
\[\Phi_\omega (u) \geq \frac{1}{2}\|u\|^2-\frac{\omega}{2}\|u\|_{L^2}^2-C\|u\|^p\geq \frac{m-|\omega|}{2m}\|u\|^2 -C\|u\|^p.\]
Therefore, we can choose $\rho$ small, such that $\kappa>0$.
\item[(2)] Let $u=u^-+te\in E_e$, then we have
\[\Psi(u)\geq a_1\left( \|u^+\|_{L^p}^p+\|u^-\|_{L^2}^2\right) =a_1\left( t^p\|e\|_{L^p}^p+\|u^-\|_{L^2}^2\right) .\]
Therefore,
\begin{align*}
\Phi_\omega (u)&=\frac{t^2}{2}-\frac{1}{2}\|u^-\|^2-\frac{\omega}{2}\|u^-+te\|^2_{L^2}-\Psi(u)\\
&\leq \left(\frac{1}{2}-\frac{m}{2}\|e\|_{L^2}^2\right) t^2-\frac{1}{2}\left(\|u^-\|^2+\omega\|u^-\|_{L^2}^2\right)-a_1t^p\|e\|_{L^p}^p\\
&\leq C(t^2-t^p).
\end{align*}
Here, we choose $e\in E^+$ such that $\|e\|_{L^p}\neq 0$. Therefore, there exists $\sigma>0$, such that
$\sup\Phi_\omega (E_e)<\sigma.$ Based on the previous argument, it is clear that we can choose $R$ large, such that
\[\sup\Phi_\omega(E_e\setminus B_R)\leq 0.\]
\end{itemize}

\end{proof}

\noindent{\bf $*$ The Cerami sequence.}

\begin{lemma}\label{4.3}
Under the assumptions of Theorem \ref{mainthm}, any $(C)_c$-sequence of $\Phi_\omega$ is bounded.
\end{lemma}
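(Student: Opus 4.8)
The plan is to exploit the asymmetric structure of the nonlinearity recorded in Lemma \ref{lem2.4} together with the fact that, for $\omega\in(-m,m)$, the quadratic part of $\Phi_\omega$ is uniformly positive definite on $E^+$ and uniformly negative definite on $E^-$. Let $\{u_k\}$ be a $(C)_c$-sequence, so that $\Phi_\omega(u_k)\to c$ and $(1+\|u_k\|)\|\Phi_\omega'(u_k)\|_{E^*}\to 0$. Since $\|u_k^\pm\|\le\|u_k\|$, the Cerami condition immediately yields $\langle\Phi_\omega'(u_k),u_k\rangle\to 0$ and $\langle\Phi_\omega'(u_k),u_k^\pm\rangle\to 0$. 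Throughout I would use the elementary bounds stated just before the functional was introduced, namely $\|u^\pm\|^2-\omega\|u^\pm\|_{L^2}^2\ge \frac{m-|\omega|}{m}\|u^\pm\|^2$, which is precisely where the open gap $|\omega|<m$ enters and where the argument must break down at the boundary $|\omega|=m$.

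First I would bound $\|u_k^+\|_{L^p}$. Forming the combination $2\Phi_\omega(u_k)-\langle\Phi_\omega'(u_k),u_k\rangle$ cancels the quadratic and $L^2$ terms and leaves $\langle\Psi'(u_k),u_k\rangle-2\Psi(u_k)$, which by the left-hand inequality of Lemma \ref{lem2.4} (2) is bounded below by $b_1\|u_k^+\|_{L^p}^p$. Since the left-hand side converges to $2c$, this shows $\sup_k\|u_k^+\|_{L^p}^p<\infty$.

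The crux is then to bound $\|u_k^-\|$, the step that is unavailable once $|\omega|=m$. Testing $\Phi_\omega'(u_k)$ against $u_k^-$ gives $\langle\Psi'(u_k),u_k^-\rangle=-(\|u_k^-\|^2+\omega\|u_k^-\|_{L^2}^2)+o(1)$. On the other hand, the lower bound in Lemma \ref{lem2.4} (3), $\langle\Psi'(u_k),u_k^-\rangle\ge d_2(\|u_k^-\|_{L^2}^2-\|u_k^+\|_{L^p}^p)\ge -d_2\|u_k^+\|_{L^p}^p$, combined with the negative-definiteness estimate, yields $\frac{m-|\omega|}{m}\|u_k^-\|^2\le \|u_k^-\|^2+\omega\|u_k^-\|_{L^2}^2\le d_2\|u_k^+\|_{L^p}^p+o(1)$. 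Because $\|u_k^+\|_{L^p}^p$ is already controlled, this bounds $\|u_k^-\|$. Finally, testing against $u_k^+$ and using the upper bound of Lemma \ref{lem2.4} (3), $\langle\Psi'(u_k),u_k^+\rangle\le c_1\|u_k^+\|_{L^p}^p+d_1\|u_k^-\|_{L^2}^2$, together with $\frac{m-|\omega|}{m}\|u_k^+\|^2\le\|u_k^+\|^2-\omega\|u_k^+\|_{L^2}^2=\langle\Psi'(u_k),u_k^+\rangle+o(1)$, the already-established control of $\|u_k^+\|_{L^p}$, and the estimate $\|u_k^-\|_{L^2}\le m^{-1/2}\|u_k^-\|$ from Lemma \ref{lem2.1}, bounds $\|u_k^+\|$. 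Since $\|u_k\|^2=\|u_k^+\|^2+\|u_k^-\|^2$, the sequence is bounded. The only delicate ingredient is the sign structure of $(F_4)$: it is the one-sided lower bound $\langle\partial_t\hat F,t\rangle\ge d_2(|t|^2-|s|^p)$ that makes the test against $u_k^-$ productive, so I would be careful to invoke precisely Lemma \ref{lem2.4} (3) rather than the cruder two-sided bound in Lemma \ref{lem2.4} (2).
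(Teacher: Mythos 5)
Your proof is correct and takes essentially the same route as the paper: the identical first step (forming $2\Phi_\omega(u_k)-\langle\Phi_\omega'(u_k),u_k\rangle$ and invoking Lemma \ref{lem2.4}(2) to bound $\|u_k^+\|_{L^p}$), followed by the same use of Lemma \ref{lem2.4}(3) together with the spectral-gap estimate $\frac{m-|\omega|}{m}\|\cdot\|^2\leq \|\cdot\|^2\pm\omega\|\cdot\|_{L^2}^2$. The only cosmetic difference is that you test against $u_k^-$ and $u_k^+$ in two sequential steps, whereas the paper performs the single combined test against $u_k^+-u_k^-$, which bounds $\|u_k\|^2$ in one stroke.
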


\begin{proof}
Let $\{u_k\}$ be a $(C)_c$-sequence of $\Phi_\omega$. Since
\[\Phi_\omega(u_k)-\frac{1}{2}\langle \Phi_\omega'(u_k),u_k\rangle=\frac{1}{2}\langle \Psi'(u_k),u_k\rangle-\Psi(u_k)\geq \frac{b_1}{2}\|u_k^+\|_{L^p}^p .\]
We have $\left\{\|u_k^+\|_{L^p}\right\}$ is bounded. Observe that
\begin{align*}
\langle \Phi_\omega'(u_k),u_k^+-u_k^-\rangle =\|u_k\|^2-\omega\left(\|u_k^+\|_{L^2}^2-\|u_k^-\|_{L^2}^2\right)-\langle \Psi'(u_k),u_k^+-u_k^-\rangle,
\end{align*}
and by Lemma \ref{lem2.4} (3), we have
\[\langle \Psi'(u_k),u_k^+-u_k^-\rangle\leq C\left(\|u_k^+\|_{L^p}^p-\|u_k^-\|_{L^2}^2\right)\leq C.\]
Therefore, we get
\[\frac{m-|\omega|}{m}\|u_k\|^2\leq \|u_k\|^2-\omega\left(\|u_k^+\|_{L^2}^2-\|u_k^-\|_{L^2}^2\right)=\langle \Phi_\omega'(u_k),u_k^+-u_k^-\rangle+\langle \Psi'(u_k),u_k^+-u_k^-\rangle .\]
Since $\omega\in (-m,m)$, we obtain $\{u_k\}$ is bounded in $E$.
\end{proof}

\medskip
Based on our prior discussion, we are able to derive a $(C)_c$-sequence by making use of the critical point theorem. Subsequently, we employ this sequence to ascertain a weak solution for \eqref{eq1.1} by invoking the concentration compactness argument. This methodology forms the core essence of the proof for the existence results of \eqref{eq1.1} when $\omega$ falls within the range of $(-m,m)$. This is a scenario we notate as $(2.1)$.

\medskip

\noindent {\bf Proof of Theorem \ref{mainthm} (2.1).} Since the functional $\Phi_\omega\in\mathcal{C}^1(E,\mathbb{R})$ possesses the above topological properties and geometric structures as in Lemma \ref{topo} and \ref{geo}.
By the standard critical point theorem in \cite{MR2255874}, we have $\Phi_\omega$ possesses a $(C)_c$-sequence $\{u_k\}$ with $\kappa\leq c\leq \sup \Phi_\omega(Q)$.
By Lemma \ref{4.3}, $\{u_k\}$ is bounded in $E$. Thus we may assume that $u_k\rightharpoonup u$ in $L^2(\mathbb{R}^3,\mathbb{C}^4)$.

\medskip
{\bf Claim.} For $r>0$ arbitrary there exists a sequence $\left\{y_{k}\right\}$ in $\mathbb{R}^{3}$ and $\eta>0$ such that
\begin{equation*}
	\liminf _{k \rightarrow \infty} \int_{B_r\left(y_{k} \right)}\left|u_{k} \right|^{2} d x \geq \eta .
\end{equation*}
If not, then by Lemma \ref{Lem3.3}, we have $u_{k}\rightarrow 0$ in $L^{t}\left(\mathbb{R}^{3}, \mathbb{C}^4\right)$, for any $t \in\left(2,3\right)$.
Then by Lemma \ref{projection}, we have $u_{k}^\pm\rightarrow 0$ in $L^{t}\left(\mathbb{R}^{3}, \mathbb{C}^4\right)$. Moreover, we have
\begin{align*}
\langle \Phi_\omega'(u_k), u_k^+-u_k^-\rangle =\|u_k \|^2-\omega\|u_k^+\|_{L^2}^2+\omega\|u_k^-\|_{L^2}^2-\langle \Psi'(u_k),u_k^+-u_k^-\rangle,
\end{align*}
and
\[\langle \Psi'(u_k),u_k^+-u_k^-\rangle \leq C\left(\|u_k^+\|_{L^p}^p-\|u_k^-\|_{L^2}^2\right)\leq C\|u_k^+\|_{L^p}^p\rightarrow 0.\]
Therefore, we have $\|u_k^+\|\rightarrow 0$, and $\|u_k^+\|_{L^2}\rightarrow 0$, as $k\rightarrow \infty$. It is clear that
\begin{align*}
\Phi_\omega(u_k)&=\frac{1}{2}\left(\|u_k^+\|^2-\|u_k^-\|^2\right)-\frac{\omega}{2}\|u_k\|_{L^2}^2-\Psi(u_k)\\
&=\frac{1}{2}\left(\|u_k^+\|^2-\omega\|u_k^+\|_{L^2}^2\right)-\frac{1}{2}\left(\|u_k^-\|^2+\omega\|u_k^-\|_{L^2}^2\right)-\Psi(u_k)\\
&\leq \frac{1}{2}\left(\|u_k^+\|^2-\omega\|u_k^+\|_{L^2}^2\right).
\end{align*}
This yields $c=\lim\limits_{k \rightarrow \infty} \Phi_\omega\left(u_{k}\right) \leq 0$, a contradiction.

\medskip

Now we choose $a_{k} \in \mathbb{Z}^{3}$ such that $\left|a_{k}-y_{k}\right|=\min \left\{\left|a-y_{k}\right|: a \in \mathbb{Z}^{3}\right\}$ and set $$v_{k}:=a_{k}* u_{k}=u_{k}\left(\cdot+a_{k}\right).$$
Using the invariance of $E$ and $E^{\pm}$ under the action of $\mathbb{Z}^{3}$, we see that $v_{k} \in E$ and
\begin{equation*}
	\left\|v_{k} \right\|_{L^{2}\left(B_{r+\sqrt{3} / 2}(0)\right)} \geq \frac{\eta}{2}.
\end{equation*}
Moreover, $\left\|v_{k}\right\|=\left\|u_{k}\right\|$, hence $\left\|v_{k}\right\|$ is bounded.
Up to a subsequence (which we continue to denote by $\left\{v_{k}\right\}$), we have $v_{k} \rightharpoonup u$ in $E$ and $v_{k} \rightarrow u$ in $L_{\text {loc }}^{t}\left(\mathbb{R}^{3}, \mathbb{C}^4\right)$, for any $t \in\left[2,3\right)$. It is clear that $\left\|u^{+}\right\|_{L^{2}\left(B_{r+\sqrt{3} / 2}(0)\right)}\geq \frac{\eta}{2}$, which implies $u \neq 0$.
Then by the topological properties of $\Phi_\omega$, we have $u$ is a nontrivial critical point of $\Phi_\omega$.

\medskip
Next, we estimate the regularity of critical points of $\Phi_\omega$. We have obtained that the nontrivial critical point $u$ is in $H^{1/2}(\mathbb{R}^3,\mathbb{C}^4)$.
By using a standard bootstrap argument, we can show higher regularity of these solutions. We sketch the idea of the proof as follows:
\medskip

Step 1. Initial Data: $u$ belongs to $H^{1/2}(\mathbb{R}^3,\mathbb{C}^4)$.

Step 2. Improve $L^p$-integrability of $u$ by iteration as follows.
\[ \xymatrix{
	u\in L^q(\mathbb{R}^3,\mathbb{C}^4) \ar@{=>}[r]  & u \in W^{1,q'}(\mathbb{R}^3,\mathbb{C}^4)  \ar@{=>}[d]
\\
	& u\in L^{q''}(\mathbb{R}^3,\mathbb{C}^4)\ar@{-->}[ul]^{q''>q}                       }\]

Step 3. Obtain that  $u\in W^{1,p}$ for any $p\geq 2$.

\medskip
Assume $u$ is a nontrivial critical point of $\Phi_\omega$, then $u$ is a weak solution of \eqref{eq1.1}. We may write $u=(D-\omega)^{-1} F_u(u)$ as  $0\notin \sigma (D-\omega)$.
By
\[\int_{\mathbb{R}^3} |F_u(u)|^tdx\leq C\|u\|_{L^{(p-1)t}}^{(p-1)t},\]
we have $u\in W^{1,t}(\mathbb{R}^3,\mathbb{C}^4)$ for $t\in [2/(p-1), 3/(p-1)]$. Therefore,
$$u\in L^{n},\quad  n=  \frac{3t}{3-t} .$$
If we have obtained $u\in L^d$ for some $d\geq 2$, then $u\in L^{d'}$ with $d'$ satisfies $1/d'=1/n-1/3$. Since $d'>d$ for $d>2$. Consequently, a standard bootstrap argument shows that
\[u\in W^{1,t}(\mathbb{R}^3,\mathbb{C}^4),\quad \forall t\geq 2.\]
Moreover, $u\in \mathcal{C}^{0,\gamma}$ for some $\gamma \in (0,1)$ by Morrey's inequality.
  $\hfill\Box$
\medskip

\section{Existence results of (NDE) with spectrum zero}\label{sec5}

In this section, we still denote $$\Phi_{-m}(u)=\frac{1}{2}\left( \|u^+\|^2-\|u^-\|^2\right)+\frac{m}{2}\left\|u\right\|_{L^2}^2-\Psi (u) ,\quad \Psi (u)=\int_{\mathbb{R}^3} F( u)dx$$

It is difficult to deal with the functional $\Phi_{-m}$ since we cannot show the $L^2$-boundedness of $(PS)_c$-sequence. Therefore, we introduce the following functional $$\tilde{\Phi}_\lambda (u)=\frac{1}{2}\left( \|u^+\|^2-\|u^-\|^2\right)+\frac{\lambda m}{2}\left\|u\right\|_{L^2}^2-\Psi(u),$$
where $\lambda\in (0,1)$. By the previous discussion, we have for any $\lambda \in (-1,1)$, the functional $\tilde{\Phi}_\lambda$ possesses a critical point $u_\lambda \in E$ with value $c_\lambda$.
Moreover, $c_\lambda$ is formed by
\[c_\lambda=\inf_{w\in E^+} \sup_{u\in E_w} \tilde{\Phi}_\lambda(u),\quad E_w=E^-\oplus \text{span}\ \{w\}.\]
 The key point of the following Lemma is the independence of $\lambda$; this enables us to obtain the uniformly boundedness of the critical value of $\tilde{\Phi}_\lambda$.  
\begin{lemma}Under the assumptions of Theorem \ref{mainthm}, the following conclusions hold:
\begin{itemize}
\item[(i)] There exists $\rho$, $r^*>0$ (independent of $\lambda$), such that $\kappa:=\inf \tilde{\Phi}_\lambda (\partial B_\rho\cap E^+)\geq r^*>0$, for any $\lambda\in (0,1)$.
\item[(ii)] For some $e\in E^+$, with $\|e\|=1$, there exists $R$, $\sigma>0$ (independent of $\lambda$), such that
\[\sup\tilde{\Phi}_\lambda (E_e)<\sigma,\quad \sup\tilde{\Phi}_\lambda (E_e\setminus B_R)\leq 0,\]
where $B_R=\{u\in E_e: \|u\|\leq R\}$.
\end{itemize}
\end{lemma}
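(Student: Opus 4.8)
The plan is to follow the structure of the proof of Lemma~\ref{geo}, but to track every constant so that none of them degenerates as $\lambda \to 1^-$. The one place where $\lambda$ enters dangerously is the indefinite quadratic form on $E^-$: writing $\omega = -\lambda m$, its negative-definite part carries the coefficient $\frac{1-\lambda}{2}$, which vanishes as $\lambda \to 1$, so the naive argument would produce a radius $R$ that blows up. The idea to circumvent this is to \emph{borrow coercivity from the nonlinearity}, using the lower bound $\Psi(u) \geq a_1(\|u^+\|_{L^p}^p + \|u^-\|_{L^2}^2)$ of Lemma~\ref{lem2.4}(1); it is precisely the $\|u^-\|_{L^2}^2$ term (coming from hypothesis $(F_2)$) that compensates the loss.

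For part (i) I would restrict to $u \in E^+$, so that $u^- = 0$ and $\tilde\Phi_\lambda(u) = \frac12\|u\|^2 + \frac{\lambda m}{2}\|u\|_{L^2}^2 - \Psi(u)$. Since $\lambda > 0$ the middle term is nonnegative and can simply be discarded, which is exactly what removes the $\lambda$-dependence. Using $\Psi(u) \leq a_2\|u\|_{L^p}^p \leq C\|u\|^p$ from Lemma~\ref{lem2.4}(1) together with the embedding of Lemma~\ref{lem2.1}, I obtain $\tilde\Phi_\lambda(u) \geq \frac12\|u\|^2 - C\|u\|^p$. As $p > 2$, choosing $\rho$ small and setting $r^* := \frac12\rho^2 - C\rho^p > 0$ gives the claim with $\rho, r^*$ independent of $\lambda$.

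For part (ii) I would fix $e \in E^+$ with $\|e\| = 1$ and $\|e\|_{L^p} \neq 0$, and write $u = u^- + te \in E_e$. Since $E^+$ and $E^-$ are $L^2$-orthogonal, $\|u^- + te\|_{L^2}^2 = \|u^-\|_{L^2}^2 + t^2\|e\|_{L^2}^2$. The $t$-terms are controlled by $\frac12 t^2 + \frac{\lambda m}{2}t^2\|e\|_{L^2}^2 \leq t^2$, using $\lambda < 1$ and $m\|e\|_{L^2}^2 \leq \|e\|^2 = 1$ from Lemma~\ref{lem2.1}. The \textbf{crucial step} is the $u^-$-block: inserting $-\Psi(u) \leq -a_1\|u^-\|_{L^2}^2 - a_1 t^p\|e\|_{L^p}^p$, I must bound $-\frac12\|u^-\|^2 + \bigl(\frac{\lambda m}{2} - a_1\bigr)\|u^-\|_{L^2}^2$. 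Using $0 \leq \|u^-\|_{L^2}^2 \leq \frac1m\|u^-\|^2$ and splitting on the sign of $\frac{\lambda m}{2} - a_1$, in either case this is at most $-c_0\|u^-\|^2$ with $c_0 := \min\{\tfrac12, \tfrac{a_1}{m}\} > 0$ independent of $\lambda$; the point is that even as $\lambda \to 1$ the surviving coefficient is $\frac{a_1}{m}$, not $\frac{1-\lambda}{2}$. Combining everything yields $\tilde\Phi_\lambda(u) \leq t^2 - C_1 t^p - c_0\|u^-\|^2$ with $C_1 := a_1\|e\|_{L^p}^p$, all constants independent of $\lambda$.

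From this uniform estimate both assertions follow. Since $\|u^-\|^2 \geq 0$ and $t^2 - C_1 t^p$ is bounded above, $\sup\tilde\Phi_\lambda(E_e) < \sigma$ for any $\sigma$ exceeding $\max_{t\geq 0}(t^2 - C_1 t^p)$. For the second inequality, note $\|u\|^2 = t^2 + \|u^-\|^2$: if $\|u^-\|^2 \geq \sigma/c_0$ then $\tilde\Phi_\lambda(u) \leq \sigma - c_0\|u^-\|^2 \leq 0$, while if $\|u^-\|^2 < \sigma/c_0$ and $\|u\| \geq R$ then $t$ is large enough (for $R$ chosen so that $R^2 - \sigma/c_0$ exceeds the positive zero of $t \mapsto t^2 - C_1 t^p$) to force $t^2 - C_1 t^p \leq 0$, hence $\tilde\Phi_\lambda(u) \leq 0$. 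This fixes $R$ and $\sigma$ independently of $\lambda$. The main obstacle, as flagged above, is exactly the uniform control of the $u^-$-block; everything else is a routine adaptation of Lemma~\ref{geo}.
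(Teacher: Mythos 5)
Your proof is correct, and for part (i) it is the same as the paper's: discard the nonnegative term $\frac{\lambda m}{2}\|u\|_{L^2}^2$ (here $\lambda>0$ is used) and repeat the argument of Lemma~\ref{geo}(1). For part (ii), however, you do something genuinely stronger than the paper, and the difference matters. The paper estimates $\tilde{\Phi}_\lambda(u^-+te)\leq \bigl(\tfrac12+\tfrac m2\|e\|_{L^2}^2\bigr)t^2-\tfrac12\bigl(\|u^-\|^2-\lambda m\|u^-\|_{L^2}^2\bigr)-a_1t^p\|e\|_{L^p}^p$ and then simply drops the middle term (it is nonpositive), ending with the bound $C(t^2-t^p)$. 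That bound does give a $\lambda$-independent $\sigma$, but it carries no information in the $u^-$ direction: an element of $E_e\setminus B_R$ may have $t$ small and $\|u^-\|$ huge, where $C(t^2-t^p)$ is positive, and the only coercive term the paper's display offers there is $-\tfrac{1-\lambda}{2}\|u^-\|^2$, whose constant degenerates as $\lambda\to1^-$. So the paper's concluding sentence (``it is clear that we can choose $R$ large'') really produces an $R$ depending on $\lambda$, while the lemma — and its use in Section~\ref{sec5}, where $\lambda_n\to1$ — requires $R$ independent of $\lambda$. Your argument closes exactly this gap: by retaining the $a_1\|u^-\|_{L^2}^2$ contribution of Lemma~\ref{lem2.4}(1) (i.e. the $|t|^2$ term in $(F_2)$, the particle--antiparticle interaction) you control the $u^-$-block by $-c_0\|u^-\|^2$ with $c_0=\min\{\tfrac12,\tfrac{a_1}{m}\}$ independent of $\lambda$, via the two-case comparison using $\|u^-\|_{L^2}^2\leq\tfrac1m\|u^-\|^2$ from Lemma~\ref{lem2.1}. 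This yields $\tilde{\Phi}_\lambda(u)\leq t^2-C_1t^p-c_0\|u^-\|^2$ with all constants uniform in $\lambda\in(0,1)$, from which both conclusions, including the uniform choice of $R$, follow cleanly. In short: same skeleton as the paper, but your uniform coercivity in $E^-$ is the missing ingredient that makes the ``$R$ independent of $\lambda$'' assertion actually proved rather than merely asserted.
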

\begin{proof}
\begin{itemize}
\item[(i)] Similar to Lemma \ref{geo}, for any $u\in E^+$, by $\lambda >0$, we have
\[\tilde{\Phi}_\lambda (u) \geq \frac{1}{2}\|u\|^2+\frac{\lambda m}{2}\|u\|_{L^2}^2-C\|u\|^p\geq  \frac{1}{2}\|u\|^2 -C\|u\|^p.\]
  Therefore, we can choose $\rho$ small, such that $\kappa\geq r^*>0$, for some $r^*>0$ uniformly with respect to $\lambda$.
\item[(ii)] Let $u=u^-+te\in E_e$, by $\lambda <1$, 
% \item[(i)] For any $u\in E^+$, we have
% \[\tilde{\Phi}_\lambda (u)=\frac{1}{2}\|u\|^2+\frac{\lambda m}{2}\|u\|_{L^2}^2-\Psi(u),\]
% By the assumptions on $\Psi$, we have $\Psi(u)\leq a_2 \|u\|_{L^p}^p\leq C\|u\|^p$. Thus, we have
% \[\tilde{\Phi}_\lambda (u) \geq \frac{1}{2}\|u\|^2+\frac{\lambda m}{2}\|u\|_{L^2}^2-C\|u\|^p\geq  \frac{1}{2}\|u\|^2 -C\|u\|^p.\]
% Therefore, we can choose $\rho$ small, such that $\kappa\geq r^*>0$, for some $r^*>0$ uniformly with respect to $\lambda$.
% \item[(ii)] Let $u=u^-+te\in E_e$, then we have
% \[\Psi(u)\geq a_1\left( \|u^+\|_{L^p}^p +\|u^-\|_{L^2}^2\right)\geq  a_1t^p\|e\|_{L^p}^p .\]
we have 
\begin{align*}
\tilde{\Phi}_\lambda (u)&=\frac{t^2}{2}-\frac{1}{2}\|u^-\|^2+\frac{\lambda m}{2}\|u^-+te\|^2_{L^2}-\Psi(u)\\
&\leq \left(\frac{1}{2}+\frac{m}{2}\|e\|_{L^2}^2\right) t^2-\frac{1}{2}\left(\|u^-\|^2-\lambda m\|u^-\|_{L^2}^2\right)-a_1t^p\|e\|_{L^p}^p\\
&\leq C(t^2-t^p).
\end{align*}
Here, we choose $e\in E^+$ such that $\|e\|_{L^p}\neq 0$. Therefore, there exists $\sigma>0$ (independent of $\lambda$), such that
$\sup\tilde{\Phi}_\lambda (E_e)<\sigma.$ Based on the previous argument, it is clear that we can choose $R$ large, such that
\[\sup\tilde{\Phi}_\lambda (E_e\setminus B_R)\leq 0.\]
\end{itemize}
\end{proof}

It is worth mentioning that if $\lambda\rightarrow -1$, the proof of $(i)$ fails as we may not find a control of the term $\frac{\lambda m}{2}\|u\|_{L^2}^2$.
Based on these two lemmas, we have
\begin{corollary}
There exist $\kappa$, $\sigma >0$ (independent of $\lambda$), such that
\[\kappa\leq c_\lambda \leq \sigma,\quad \forall \lambda \in (0,1).\]
\end{corollary}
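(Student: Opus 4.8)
The plan is to establish the two-sided bound $\kappa \le c_\lambda \le \sigma$ directly from the minimax characterization $c_\lambda = \inf_{w\in E^+}\sup_{u\in E_w}\tilde{\Phi}_\lambda(u)$ together with the two preceding lemmas, tracking carefully that every constant is independent of $\lambda \in (0,1)$.

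For the lower bound, the key observation is that the linking structure from part $(i)$ forces every set $\sup_{u \in E_w}\tilde{\Phi}_\lambda(u)$ to see the sphere $\partial B_\rho \cap E^+$. More precisely, for any $w \in E^+$ the subspace $E_w = E^- \oplus \operatorname{span}\{w\}$ intersects the sphere $\partial B_\rho \cap E^+$ in the sense that the standard linking argument guarantees $E_w \cap (\partial B_\rho \cap E^+) \ne \varnothing$ after the appropriate topological degree/linking argument. Consequently
\[
\sup_{u\in E_w}\tilde{\Phi}_\lambda(u) \ge \inf_{u\in \partial B_\rho \cap E^+}\tilde{\Phi}_\lambda(u) = \kappa \ge r^* > 0,
\]
and taking the infimum over $w \in E^+$ yields $c_\lambda \ge \kappa \ge r^*$. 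Since $r^*$ was produced in part $(i)$ uniformly in $\lambda$, the lower bound is $\lambda$-independent.

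For the upper bound, I would simply specialize the infimum to the particular direction $e \in E^+$ supplied by part $(ii)$: since $c_\lambda$ is an infimum over $w \in E^+$, we have
\[
c_\lambda \le \sup_{u\in E_e}\tilde{\Phi}_\lambda(u) < \sigma,
\]
where the strict inequality is precisely the conclusion $\sup \tilde{\Phi}_\lambda(E_e) < \sigma$ of part $(ii)$, and again $\sigma$ is independent of $\lambda$ by construction. Combining the two estimates gives $\kappa \le c_\lambda \le \sigma$ for all $\lambda \in (0,1)$, as claimed.

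The main obstacle is the lower bound: one must verify that the minimax class genuinely links with $\partial B_\rho \cap E^+$, i.e. that the sup over $E_w$ cannot dip below $\kappa$. This requires invoking the linking geometry (the fact that $E_w$ and $\partial B_\rho \cap E^+$ are linked in $E = E^- \oplus E^+$) rather than a naive pointwise comparison, since $E_w$ and $E^+$ are not nested. The cleanest route is to appeal to the same linking/intersection property underlying the critical point theorem of \cite{MR2255874} that was already used to produce $c_\lambda$; the uniformity in $\lambda$ then follows automatically because both $r^*$ and $\sigma$ were extracted independently of $\lambda$ in the two parts of the preceding lemma.
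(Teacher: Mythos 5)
Your proof is correct and matches the paper's (implicit) argument: the corollary is stated there as an immediate consequence of the two parts of the preceding lemma, exactly as you derive it, with the upper bound coming from testing the infimum at $w=e$ and the lower bound from the sphere $\partial B_\rho \cap E^+$. One small simplification: no topological degree or linking machinery is needed for the lower bound, since for $w \neq 0$ the set $E_w \cap \partial B_\rho \cap E^+$ visibly contains the point $\rho w/\|w\|$, which already gives $\sup_{u \in E_w}\tilde{\Phi}_\lambda(u) \geq \kappa \geq r^* > 0$ uniformly in $\lambda$.
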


The boundedness of $c_\lambda$ plays a crucial role in the proof concerning the Spectrum Zero Problem. It is also noteworthy to consider the continuity of $c_\lambda$ with respect to the parameter $\lambda$. This continuity appears to depend on the reduction properties of $\tilde{\Phi}_\lambda$. We hypothesize that, given certain conditions related to the nonlinear characteristics of the system, the least energy level  $c_\lambda$ exhibits continuity with respect to the parameter  $\lambda$ in the open interval $(-1,1)$.

Next, we choose $\lambda_n =1-\frac{1}{n}$, and set $\tilde{\Phi}_n=\tilde{\Phi}_{\lambda_n}$, $c_n=c_{\lambda_n}$. Moreover, we assume $c_n$ is achieved by $u_n=u_{\lambda_n}\in E$.
Let $c$ be a limit point of $\{c_n\}$. Up to a subsequence, we show $\{u_n\}$ forms a $(PS)_c$-sequence of $\Phi_{-m}\in \mathcal{C}^1(E,\mathbb{R})$.
Before that, we show the uniform boundedness of $\{u_n\}$.
Based on the construction of $\{u_n\}$, we have $u_n$ is a critical point of $\tilde{\Phi}_{n}\in \mathcal{C}^1(E,\mathbb{R})$ with value $c_n$. That is,
\[\tilde{\Phi}_n(u_n)=c_n,\quad \tilde{\Phi}_{n}'(u_n)  =0.\]

\begin{proposition}\label{bound}
$\{u_n\}$ is a bounded sequence in $E$.
\end{proposition}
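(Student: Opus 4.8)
The plan is to exploit the fact that each $u_n$ is an \emph{exact} critical point of $\tilde{\Phi}_n$ (not merely a $(PS)$-point), together with the uniform bound $c_n\le\sigma$ from the corollary above, and to test the equation $\tilde{\Phi}_n'(u_n)=0$ against $u_n^+$ and $u_n^-$ \emph{separately}, rather than against $u_n^+-u_n^-$ as in Lemma~\ref{4.3}. The reason for this change is precisely the difficulty flagged in the introduction: testing against $u_n^+-u_n^-$ produces the coefficient $\frac{m-|\omega|}{m}=1-\lambda_n\to 0$, which degenerates at the left endpoint and is useless for a $\lambda$-uniform bound.

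First I would recover control of $\|u_n^+\|_{L^p}$. Since $\langle\tilde{\Phi}_n'(u_n),u_n\rangle=0$, the identity
\[c_n=\tilde{\Phi}_n(u_n)-\tfrac{1}{2}\langle\tilde{\Phi}_n'(u_n),u_n\rangle=\tfrac{1}{2}\langle\Psi'(u_n),u_n\rangle-\Psi(u_n)\]
holds, and the left inequality of Lemma~\ref{lem2.4}(2) gives $c_n\ge\frac{b_1}{2}\|u_n^+\|_{L^p}^p$. Combined with $c_n\le\sigma$, this yields $\|u_n^+\|_{L^p}^p\le 2\sigma/b_1$ uniformly in $n$. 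This quantity will act as the sole source term in all subsequent estimates.

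The heart of the argument is the test against $u_n^-$. Using $\langle\tilde{\Phi}_n'(u_n),u_n^-\rangle=0$, the $L^2$-orthogonality of $E^+$ and $E^-$, and the second half of Lemma~\ref{lem2.4}(3), one obtains
\[-\|u_n^-\|^2+\lambda_n m\|u_n^-\|_{L^2}^2=\langle\Psi'(u_n),u_n^-\rangle\ge d_2\left(\|u_n^-\|_{L^2}^2-\|u_n^+\|_{L^p}^p\right),\]
whence
\[\|u_n^-\|^2\le(\lambda_n m-d_2)\|u_n^-\|_{L^2}^2+d_2\|u_n^+\|_{L^p}^p.\]
If $\lambda_n m-d_2\le 0$ the first term is harmless; otherwise I insert $\|u_n^-\|_{L^2}^2\le\frac{1}{m}\|u_n^-\|^2$ from Lemma~\ref{lem2.1}, which gives
\[\frac{m(1-\lambda_n)+d_2}{m}\,\|u_n^-\|^2\le d_2\|u_n^+\|_{L^p}^p,\]
so that $\|u_n^-\|^2\le m\|u_n^+\|_{L^p}^p$ is uniformly bounded. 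The decisive observation is that the coefficient $m(1-\lambda_n)+d_2$ is bounded below by $d_2>0$ uniformly in $n$, even as $\lambda_n\to 1$: it is the antiparticle coercivity constant $d_2$ of $(F_4)$, and \emph{not} the vanishing spectral gap $1-\lambda_n$, that prevents the degeneration. This is exactly where the particle--antiparticle structure of the nonlinearity is used, and I expect this $u_n^-$ estimate to be the crux of the proposition.

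Finally, testing against $u_n^+$ and applying the first inequality of Lemma~\ref{lem2.4}(3) gives
\[\|u_n^+\|^2+\lambda_n m\|u_n^+\|_{L^2}^2=\langle\Psi'(u_n),u_n^+\rangle\le c_1\|u_n^+\|_{L^p}^p+d_1\|u_n^-\|_{L^2}^2.\]
Since the right-hand side is already controlled by the bounds on $\|u_n^+\|_{L^p}$ and on $\|u_n^-\|$ (again via $\|u_n^-\|_{L^2}^2\le\frac{1}{m}\|u_n^-\|^2$), the term $\|u_n^+\|^2$ is uniformly bounded. Adding the two estimates bounds $\|u_n\|^2=\|u_n^+\|^2+\|u_n^-\|^2$, which is the claim. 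The only genuine obstacle is the $u_n^-$ estimate; the remaining steps are bookkeeping, and there is no circularity because the $u_n^-$ bound depends only on $\|u_n^+\|_{L^p}$, which was fixed in the first step.
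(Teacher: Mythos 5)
Your proof is correct and takes essentially the same route as the paper's: extract $\|u_n^+\|_{L^p}\le C$ from $c_n\le\sigma$ via Lemma~\ref{lem2.4}(2), then test $\tilde{\Phi}_n'(u_n)=0$ against $u_n^-$ and $u_n^+$ separately, using the two inequalities of Lemma~\ref{lem2.4}(3). The only difference is bookkeeping: you absorb the $\lambda_n m\|u_n^-\|_{L^2}^2$ term via Lemma~\ref{lem2.1} to get the full $\|u_n^-\|$ bound in a single pass, while the paper first reads off the $L^2$-bound of $u_n^-$ from the same identity and returns to it at the end to bound $\|u_n^-\|$.
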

\begin{proof}
It is clear that
\begin{align*}
\sigma\geq c_n&=\tilde{\Phi}_n(u_n)-\frac{1}{2}\langle \tilde{\Phi}_n'(u_n),u_n\rangle\\
&=\frac{1}{2}\langle \tilde{\Phi}'(u_n),u_n\rangle-\Psi(u_n)\\
&\geq \frac{b_1}{2}  \|u_n^+ \|_{L^p}^p ,
\end{align*}
which implies $\|u_n^+\|_{L^p}<C$. Due to
\[0=\langle \tilde{\Phi}'_n(u_n),u_n^-\rangle=-\|u_n^-\|^2+\lambda_n m\|u_n^-\|_{L^2}^2-\langle \Psi'(u_n),u_n^-\rangle,\]
we have
\[0\leq \|u_n^-\|^2-\lambda_nm\|u_n^-\|_{L^2}^2=-\langle \Psi' (u_n),u_n^-\rangle\leq d_2\left(\|u_n^+\|_{L^p}^p -\|u_n^-\|_{L^2}^2\right).\]
Thus, $\left\{\|u_n^-\|_{L^2}\right\}$ is bounded.
Then, by Lemma \ref{lem2.4} (3)
\[\langle\Psi'(u_n),u_n^+\rangle\leq c_1\|u_n^+\|_{L^p}^p+d_1\|u_n^- \|_{L^2}^2 ,\]
is bounded. Then by
\[0=\langle \tilde{\Phi}'_n(u_n),u_n^+\rangle=\|u_n^+\|^2+\lambda_n m\|u_n^+\|_{L^2}^2-\langle \Psi'(u_n),u_n^+\rangle,\]
we conclude $\|u_n^+\|<C$ and $\|u_n^+\|_{L^2}<C$.
Then we have $\|u_n\|_{L^2}<C$. Due to $$\|u_n^-\|^2\leq m\|u_n^-\|_{L^2}^2+C\left(\|u_n^+\|_{L^p}^p+\|u_n^-\|_{L^p}^p-\|u_n^-\|_{L^2}^2\right),$$
we conclude that  $\|u_n^-\|<C$.  This ends the proof.
\end{proof}

\medskip
Recall that $u_n$ is a critical point of $\tilde{\Phi}_n$, and $\lim\limits_{n\rightarrow \infty} \lambda_n=1$. Comparing the energy functionals $\Phi_{-m}$ and $\tilde{\Phi}_n$, we have
\begin{lemma}
$\{u_n\}$ is a $(PS)_c$-sequence of $\Phi_{-m}\in \mathcal{C}^1(E,\mathbb{R} )$.
\end{lemma}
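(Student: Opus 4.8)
The plan is to observe that $\Phi_{-m}$ and $\tilde{\Phi}_n$ differ only through the scalar factor $(1-\lambda_n)$ multiplying one $L^2$-quadratic term, and that $1-\lambda_n = 1/n \to 0$ while every relevant quantity stays bounded thanks to Proposition \ref{bound}. First I would write down the exact difference of the two functionals and of their derivatives: for every $u,v\in E$,
\[
\Phi_{-m}(u)-\tilde{\Phi}_n(u)=\frac{(1-\lambda_n)m}{2}\|u\|_{L^2}^2=\frac{m}{2n}\|u\|_{L^2}^2,
\]
\[
\langle \Phi_{-m}'(u),v\rangle-\langle \tilde{\Phi}_n'(u),v\rangle=(1-\lambda_n)m\,\Re(u,v)_{L^2}=\frac{m}{n}\Re(u,v)_{L^2}.
\]

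Next I would pass to the subsequence along which $c_n\to c$ and verify convergence of the values. By Proposition \ref{bound} the sequence $\{u_n\}$ is bounded in $E$, and Lemma \ref{lem2.1} gives $m\|u_n\|_{L^2}^2\le \|u_n\|^2$, so $\|u_n\|_{L^2}\le C$ uniformly in $n$. Since $\tilde{\Phi}_n(u_n)=c_n$, the first identity yields
\[
|\Phi_{-m}(u_n)-c_n|=\frac{m}{2n}\|u_n\|_{L^2}^2\le \frac{mC^2}{2n}\longrightarrow 0,
\]
hence $\Phi_{-m}(u_n)\to c$.

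For the derivative I would insert $\tilde{\Phi}_n'(u_n)=0$ into the second identity to get $\langle \Phi_{-m}'(u_n),v\rangle=\frac{m}{n}\Re(u_n,v)_{L^2}$ for all $v\in E$. Taking the supremum over $\|v\|=1$, Cauchy--Schwarz in $L^2$ together with the embedding $\|v\|_{L^2}\le m^{-1/2}\|v\|$ from Lemma \ref{lem2.1} gives
\[
\|\Phi_{-m}'(u_n)\|_{E^*}=\frac{m}{n}\sup_{\|v\|=1}\bigl|\Re(u_n,v)_{L^2}\bigr|\le \frac{\sqrt{m}\,C}{n}\longrightarrow 0.
\]
Combined with the convergence of the values, this shows that $\{u_n\}$ is a $(PS)_c$-sequence of $\Phi_{-m}$.

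Finally, I do not expect a genuine obstacle here: the essential difficulty has already been absorbed into Proposition \ref{bound}, whose point is precisely that the bound on $\{u_n\}$ is \emph{uniform in $n$} (equivalently, in $\lambda_n$). The only things to keep in mind are that this uniform bound is exactly what forces both $O(1/n)$ perturbation terms to vanish, and that restricting to the subsequence with $c_n\to c$ does not affect the derivative estimate, which holds along the whole sequence.
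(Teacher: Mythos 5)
Your proposal is correct and follows essentially the same route as the paper: both exploit that $u_n$ is a critical point of $\tilde{\Phi}_n$, so $\Phi_{-m}$ and $\tilde{\Phi}_n$ differ only by the $O(1/n)$ terms $\frac{m}{2n}\|u_n\|_{L^2}^2$ and $\frac{m}{n}\Re(u_n,\cdot)_{L^2}$, which vanish by the uniform bound of Proposition \ref{bound}. In fact your derivative estimate is slightly more careful than the paper's, which only tests $\Phi_{-m}'(u_n)$ against $\varphi\in\mathcal{C}_c^\infty$ and contains a typographical slip in its Cauchy--Schwarz bound (writing $\|u_n\|_{L^2}^2\|\varphi\|_{L^2}^2$ where $\|u_n\|_{L^2}\|\varphi\|_{L^2}$ is meant), whereas you correctly bound the full dual norm $\|\Phi_{-m}'(u_n)\|_{E^*}$ over the unit sphere of $E$.
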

\begin{proof}
For any $\varphi\in \mathcal{C}_c^\infty(\mathbb{R}^3,\mathbb{C}^4)$, we have
\begin{align*}
\langle \Phi_{-m}'(u_n),\varphi\rangle &=\Re(Du_n,\varphi)_{L^2}+m\Re(u_n,\varphi)_{L^2}-\langle\Psi'(u_n),\varphi\rangle\\
&=(1-\lambda_n)m\Re\left(u_n,\varphi\right)_{L^2}\\
&\leq \frac{m}{n}\|u_n\|_{L^2}^2\|\varphi\|_{L^2}^2\\
&\leq \frac{C}{n}\rightarrow 0.
\end{align*}
Moreover, we have
\[\Phi_{-m}(u_n)=\frac{1}{2}\left(\|u_n^+\|^2-\|u_n^-\|^2\right)+\frac{m}{2}\|u_n\|_{L^2}^2-\Psi(u_n)=c_n+\frac{m}{2n}\|u_n\|_{L^2}^2.\]
Thus, $\Phi_{-m}(u_n)\rightarrow c$ as $n\rightarrow \infty$.
\end{proof}

\medskip

\noindent {\bf Proof of Theorem \ref{mainthm} (2.2).}
Assume $\left\{u_{n}\right\}$ is defined as mentioned above. Then it is a $(PS)_{c}$-sequence of $\Phi_{-m}$. By Proposition \ref{bound}, this sequence is bounded in $E$.
Thus, we may assume that  $u_n\rightharpoonup  u $ in $E$. We claim that for $r>0$ arbitrary there exists a sequence $\left\{y_{n}\right\}$ in $\mathbb{R}^{3}$ and $\eta>0$ such that
\begin{equation*}
	\liminf _{n \rightarrow \infty} \int_{B_r\left(y_{n} \right)}\left|u_{n}\right|^{2} d x \geq \eta .
\end{equation*}
Indeed, if not then by Lemma \ref{Lem3.3}, $u_{n}\rightarrow 0$ in $L^{t}\left(\mathbb{R}^{3}, \mathbb{C}^4\right)$, for any $t \in\left(2,3\right)$.
Moreover, we have
\begin{align*}
\langle \Phi_{-m}'(u_n), u_n^+-u_n^-\rangle =\|u_n \|^2 +m\|u_n^+\|_{L^2}^2-m\|u_n^-\|_{L^2}^2-\langle \Psi'(u_n),u_n^+-u_n^-\rangle,
\end{align*}
and
\[\langle \Psi'(u_n),u_n^+-u_n^-\rangle \leq C\left(\|u_n^+\|_{L^p}^p-\|u_n^-\|_{L^2}^2\right)\leq C\|u_n^+\|_{L^p}^p\rightarrow 0.\]
Therefore, we have $\|u_n^+\|\rightarrow 0$, and $\|u_n^+\|_{L^2}\rightarrow 0$, as $n\rightarrow \infty$. Since
\begin{align*}
\Phi_{-m}(u_n)&=\frac{1}{2}\left(\|u_n^+\|^2-\|u_n^-\|^2\right)+\frac{m}{2}\|u_n\|_{L^2}^2-\Psi(u_n)\\
&=\frac{1}{2}\left(\|u_n^+\|^2+m\|u_n^+\|_{L^2}^2\right)-\frac{1}{2}\left(\|u_n^-\|^2-m\|u_n^-\|_{L^2}^2\right)-\Psi(u_n)\\
&\leq \frac{1}{2}\left(\|u_n^+\|^2+m\|u_n^+\|_{L^2}^2\right).
\end{align*}
This yields $c=\lim\limits_{n \rightarrow \infty} \Phi_{-m}\left(u_{n}\right) \leq 0$, a contradiction.

\medskip

Now we choose $a_{n} \in \mathbb{Z}^{3}$ such that $\left|a_{n}-y_{n}\right|=\min \left\{\left|a-y_{n}\right|: a \in \mathbb{Z}^{3}\right\}$ and set $v_{n}:=a_{n}* u_{n}=u_{n}\left(\cdot+a_{n}\right)$.
Using the invariance of $E$ and $E^{\pm}$ under the action of $\mathbb{Z}^{3}$ we see that $v_{n} \in E$ and
\begin{equation*}
	\left\|v_{n} \right\|_{L^{2}\left(B_{r+\sqrt{3} / 2}(0)\right)} \geq \frac{\eta}{2}.
\end{equation*}
Moreover, $\left\|v_{n}\right\|=\left\|u_{n}\right\|$, hence $\left\|v_{j}\right\|$ is bounded.
Up to a subsequence (which we continue to denote by $\left\{v_{n}\right\}$), we have $v_{n} \rightharpoonup u$ in $E$ and $v_{n} \rightarrow u$ in $L_{\text {loc }}^{t}\left(\mathbb{R}^{3}, \mathbb{C}^4\right)$, for any $t \in\left[2,3\right)$. Due to the claim above, we have $\left\|u \right\|_{L^{2}\left(B_{r+\sqrt{3} / 2}(0)\right)}\geq \frac{\eta}{2}$, which implies $u \neq 0$.
Then by the topological properties of $\Phi_{-m}$, we have $u\in E$ is a nontrivial critical point of $\Phi_{-m}$, which means \eqref{eq1.1} possesses at least one nontrivial solution in $E$.
Similar to the proof of Theorem \ref{mainthm} (2.1), we have $u\in H^1(\mathbb{R}^3,\mathbb{C}^4)$. 
  $\hfill\Box$
\medskip

\section{Proof of the Bifurcation Theorem}\label{sec6}

In this section, we aim to show the proof of Theorem \ref{mainthm2}. First, we recall that 
$$
\Phi_\omega(u)=\frac{1}{2}\left(\left\|u^{+}\right\|^2-\| u^-\|^2\right)-\frac{\omega}{2}\|u\|_{L^2}^2-\Psi(u).
$$
Set
$$
\begin{aligned}
Q_\omega(u) :=\left\|u^{+}\right\|^2-\left\|u^{-}\right\|^2-\omega\|u\|_{L^2}^2  
 =\Re((D-\omega) u, u)_{L^2} .
\end{aligned}
$$
Thus, we have 
$$\Phi_\omega(u)=\frac{1}{2} Q_\omega(u)-\Psi(u).$$
By \cite[Lemma 2.2]{DingYu23}, there exists a nontrivial periodic solution $\tilde{u}_m \in \mathcal{C}^{\infty}\left(\mathbb{R}^3, \mathbb{C}^4\right)$ of the following linear equation $$-i \alpha \cdot \nabla u+m \beta u=m u.$$
It is clear that $\widetilde{u}_m$ does not belong to $L^2\left(\mathbb{R}^3, \mathbb{C}^4\right)$, thus we use a truncation argument to get the desired sequence.
Set
$$
z_k(x):=k^{-\frac{3}{2}} \eta\left(\frac{x}{k}\right) \widetilde{u}_m(x),
$$
where $\eta \in \mathcal{C}_c^{\infty}\left(\mathbb{R}^3,[0,1]\right),$ $\eta(x)=1$, when $|x| \leq 1$; $\eta(x)=0$, when $|x| \geq 2$. Then we have
\begin{lemma}
\begin{itemize}
    \item[(1)] $z_k \in E$, and $\left\|z_k\right\|_{L^2}^2 \rightarrow M\left(\left|\tilde{u}_m\right|^2\right) \int_{\mathbb{R}^3} \eta^2(x) d x$, as $k \rightarrow+\infty$, where $$M(f):=\lim_{T \rightarrow \infty} \frac{1}{T^3} \int_0^T \int_0^T \int_0^T f(x) d x.$$
    \item[(2)]  $\displaystyle Q_m\left(z_k\right)=-i k^{-4} \int_{\mathbb{R}^3} \sum_{j=1}^3 \frac{\partial \eta}{\partial x_j}\left(\frac{x}{k}\right) \alpha_j \tilde{u}_m(x) \cdot \eta\left(\frac{x}{k}\right)   \tilde{u}_m(x) d x$. Moreover, $$Q_m\left(z_k\right)=O\left(\frac{1}{k}\right),\quad \left\|D z_k-m z_k\right\|_{L^2}=O\left(\frac{1}{k}\right).$$
\end{itemize}
\end{lemma}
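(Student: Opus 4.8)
The plan is to treat the two parts separately, with the analytic substance of (1) lying in a periodic averaging argument and that of (2) in an exact cancellation coming from the equation $D\tilde u_m = m\tilde u_m$.

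For the membership $z_k \in E$ in (1), I would simply note that $\tilde u_m \in \mathcal{C}^\infty$ is periodic, hence bounded together with all its derivatives, while $\eta(\cdot/k) \in \mathcal{C}_c^\infty$; consequently $z_k \in \mathcal{C}_c^\infty(\mathbb{R}^3,\mathbb{C}^4) \subset H^{1/2} = E$ by Lemma \ref{lem2.1}. For the limit of $\|z_k\|_{L^2}^2$, I would first remove the scaling by the change of variables $x = ky$:
\[
\|z_k\|_{L^2}^2 = k^{-3}\int_{\mathbb{R}^3}\eta^2(x/k)|\tilde u_m(x)|^2\,dx = \int_{\mathbb{R}^3}\eta^2(y)\,|\tilde u_m(ky)|^2\,dy .
\]
Here $\eta^2 \in L^1$ is fixed and $|\tilde u_m|^2$ is a continuous periodic function, so the integrand oscillates rapidly as $k \to \infty$. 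The key step is then the standard fact that a rapidly oscillating periodic function converges weakly-$*$ in $L^\infty$ to its mean, $|\tilde u_m(k\,\cdot)|^2 \rightharpoonup M(|\tilde u_m|^2)$, where for a periodic function the growing-cube average $M(\cdot)$ in the statement coincides with the average over one period cell. Testing this weak-$*$ convergence against $\eta^2 \in L^1$ gives $\|z_k\|_{L^2}^2 \to M(|\tilde u_m|^2)\int_{\mathbb{R}^3}\eta^2\,dx$, which is (1). I would prove the weak-$*$ convergence by the elementary partition-into-period-cells argument, or cite it as a Riemann--Lebesgue-type lemma for periodic functions.

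For (2) the computation begins with the product rule. Writing $\eta_k(x) = \eta(x/k)$ and using $\nabla\eta_k = k^{-1}(\nabla\eta)(\cdot/k)$, one finds
\[
D(\eta_k\tilde u_m) = -i(\alpha\cdot\nabla\eta_k)\tilde u_m + \eta_k D\tilde u_m = -\tfrac{i}{k}\sum_{j=1}^3(\partial_j\eta)(\cdot/k)\,\alpha_j\tilde u_m + m\,\eta_k\tilde u_m ,
\]
where the second equality uses the crucial cancellation $D\tilde u_m = m\tilde u_m$. Hence $(D - m)z_k = -\tfrac{i}{k}k^{-3/2}\sum_j(\partial_j\eta)(\cdot/k)\alpha_j\tilde u_m$, and pairing this with $z_k$ in $L^2$ yields exactly the claimed formula for $Q_m(z_k) = \Re((D-m)z_k,z_k)_{L^2}$.

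Finally I would establish the two $O(1/k)$ bounds. Since each $\alpha_j$ is an isometry of $\mathbb{C}^4$ and $\nabla\eta$ is supported in the annulus $\{k \le |x| \le 2k\}$, the change of variables $x = ky$ gives
\[
\|Dz_k - mz_k\|_{L^2}^2 \le \frac{C}{k^2}\int_{\mathbb{R}^3}|\nabla\eta(y)|^2\,|\tilde u_m(ky)|^2\,dy \le \frac{C'}{k^2},
\]
because $\tilde u_m$ is bounded and $\nabla\eta$ is compactly supported; this is the second estimate. The bound $Q_m(z_k) = O(1/k)$ then follows from Cauchy--Schwarz, $|Q_m(z_k)| \le \|Dz_k - mz_k\|_{L^2}\,\|z_k\|_{L^2}$, using that $\|z_k\|_{L^2}$ is bounded by (1). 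The main obstacle is the averaging limit in (1): one must justify the weak-$*$ convergence of $|\tilde u_m(k\cdot)|^2$ to its mean and identify that mean with $M(|\tilde u_m|^2)$ as defined through the growing cube. Everything in (2), by contrast, is a direct consequence of the product rule together with the single identity $D\tilde u_m = m\tilde u_m$, which is precisely what forces the bulk term to disappear and leaves only the lower-order contribution supported where $\nabla\eta \neq 0$.
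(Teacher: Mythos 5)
Your proposal is correct, and it is in fact more informative than what the paper itself provides: the paper gives no proof of this lemma at all, deferring entirely to Lemma 2.5 of the cited reference [DingYu23]. Your two ingredients are exactly the ones that argument rests on. For (1), the rescaling $x=ky$ followed by weak-$*$ convergence of $|\tilde u_m(k\,\cdot)|^2$ to its mean is the same Riemann--Lebesgue-type averaging that the paper itself invokes (without proof) one lemma later, in Lemma \ref{lem6.2}, so your treatment is consistent with the paper's own toolkit; the identification of the period-cell average with the growing-cube mean $M(\cdot)$ is the right point to flag. For (2), the product rule plus the eigenvalue identity $D\tilde u_m=m\tilde u_m$ is precisely what makes $(D-m)z_k$ consist only of the commutator term $-\tfrac{i}{k}k^{-3/2}\sum_j(\partial_j\eta)(\cdot/k)\,\alpha_j\tilde u_m$, and your power counting ($k^{-3}$ from the normalization, $k^{-2}$ from $\nabla\eta_k$, $k^{3}$ from the Jacobian) gives the stated $O(1/k)$ bounds. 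One small remark worth making explicit: since $D-m$ is self-adjoint and $z_k$ lies in its domain, the pairing $((D-m)z_k,z_k)_{L^2}$ is automatically real, so your identification $Q_m(z_k)=\Re((D-m)z_k,z_k)_{L^2}=((D-m)z_k,z_k)_{L^2}$ matches the lemma's displayed formula exactly, with no discrepancy from the omitted real part. Also note that what the paper actually uses afterwards, in \eqref{eq6.1}, is a bound on $Q_m(\tilde z_\omega^{+})$ rather than on $Q_m(z_k)$; this too follows from your estimate $\|Dz_k-mz_k\|_{L^2}=O(1/k)$ together with Cauchy--Schwarz, since $P^{+}$ commutes with $D$, so your emphasis on the $L^2$ bound as the primary estimate is well placed.
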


The details of the above Lemma can be found in the proof of Lemma 2.5 in \cite{DingYu23}.
Based on the spectral decomposition, we set
$z_k^{+}=P^{+} z_k$, choose $k=\frac{1}{m-\omega}$, and set $$\tilde{z}_\omega(x):=z_{\frac{1}{m-\omega}}(x).$$
Then
\begin{align}\label{eq6.1}
Q_\omega\left(\tilde{z}_\omega^{+}\right) =Q_m\left(\tilde{z}_\omega^{+}\right)+(m-\omega)  \|\tilde{z}_\omega \|_{L^2}^2  =O(m-\omega), 
\end{align}
as $\omega \rightarrow m-$.
\begin{lemma}\label{lem6.2}
$\left\|\tilde{z}_\omega^{+}\right\|_{L^p}^p=O\left((m-\omega)^{\frac{3(p-2)}{2}}\right)$, as $\omega \rightarrow m-$.
\end{lemma}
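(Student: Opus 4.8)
The plan is to first estimate the full $L^p$-norm of $\tilde{z}_\omega$ by an elementary change of variables, and then to transfer this bound to its positive-energy component $\tilde{z}_\omega^{+}$ by invoking the $L^p$-boundedness of the spectral projection $P^{+}$ provided by Lemma \ref{projection}.

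First I would write out, with $k=\frac{1}{m-\omega}$ and after the substitution $y=x/k$,
\[
\|\tilde{z}_\omega\|_{L^p}^p = k^{-\frac{3p}{2}}\int_{\mathbb{R}^3}\eta\!\left(\tfrac{x}{k}\right)^p|\tilde{u}_m(x)|^p\,dx = k^{\,3-\frac{3p}{2}}\int_{\mathbb{R}^3}\eta(y)^p\,|\tilde{u}_m(ky)|^p\,dy .
\]
Since $\tilde{u}_m$ is smooth and periodic, hence bounded, and $\eta$ has compact support, the last integral is bounded uniformly in $k$ by $\|\tilde{u}_m\|_{L^\infty}^p\int_{\mathbb{R}^3}\eta^p\,dy$; one does not even need the precise mean value $M(|\tilde{u}_m|^p)$ here, only an upper bound. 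Recalling that $3-\frac{3p}{2}=-\frac{3(p-2)}{2}$ and that $k=\frac{1}{m-\omega}$, this yields
\[
\|\tilde{z}_\omega\|_{L^p}^p \le C\,k^{-\frac{3(p-2)}{2}} = C\,(m-\omega)^{\frac{3(p-2)}{2}}.
\]

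Next I would apply Lemma \ref{projection} with $q=p\in(2,3)\subset(1,\infty)$. As $\tilde{z}_\omega=z_{1/(m-\omega)}$ is a compactly supported smooth function it lies in $E\cap L^p$, so the lemma gives $\tau_p\|\tilde{z}_\omega^{+}\|_{L^p}\le\|\tilde{z}_\omega\|_{L^p}$, whence
\[
\|\tilde{z}_\omega^{+}\|_{L^p}^p \le \tau_p^{-p}\,\|\tilde{z}_\omega\|_{L^p}^p = O\!\left((m-\omega)^{\frac{3(p-2)}{2}}\right),
\]
which is the assertion of the lemma.

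The genuine content lies in the second step rather than in the elementary scaling of the first. The positive-energy projection $P^{+}=\tfrac{1}{2}(I+|D|^{-1}D)$ is a Fourier multiplier that is \emph{not} obviously bounded on $L^p$ for $p\neq 2$, and a priori $\|\tilde{z}_\omega^{+}\|_{L^p}$ could dominate $\|\tilde{z}_\omega\|_{L^p}$; the Mikhlin-type multiplier theorem underlying Lemma \ref{projection} is precisely what rules this out and makes the transfer of the scaling bound legitimate. Thus the only points requiring care are to verify that $p$ falls in the admissible range $(1,\infty)$ and that $\tilde{z}_\omega\in E\cap L^p$, so that Lemma \ref{projection} is applicable; everything else is routine.
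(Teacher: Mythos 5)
Your proof is correct for the statement as written, but it takes a genuinely different route from the paper's. The paper works with $\tilde{z}_\omega^{+}$ directly: it writes $\tilde{z}_\omega^{+}(x)=(m-\omega)^{3/2}\eta((m-\omega)x)\,\tilde{u}_m^{+}(x)$ (implicitly letting the projection $P^{+}$ commute with multiplication by the cutoff), performs your same change of variables, and then applies a Riemann--Lebesgue type lemma to the periodic function $\tilde{u}_m^{+}$ to conclude that $(m-\omega)^{-3(p-2)/2}\|\tilde{z}_\omega^{+}\|_{L^p}^p$ converges to the constant $M(|\tilde{u}_m^{+}|^p)\int_{\mathbb{R}^3}\eta^p\,dx$. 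Your route --- a scaling bound for the unprojected function combined with the Mikhlin-type bound $\tau_p\|\tilde z_\omega^{+}\|_{L^p}\le\|\tilde z_\omega\|_{L^p}$ of Lemma \ref{projection} --- is cleaner in one respect: it avoids the commutation of the nonlocal projection with multiplication by the cutoff, which the paper does not justify, and your verification of the hypotheses of Lemma \ref{projection} is exactly right. What your approach buys less of is precision: the projection inequality goes only one way, so you obtain only the upper bound, whereas the paper's Riemann--Lebesgue argument yields two-sided asymptotics, in particular the lower bound $\|\tilde{z}_\omega^{+}\|_{L^p}^p\ge c\,(m-\omega)^{3(p-2)/2}$ for $\omega$ close to $m$. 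That lower bound is not cosmetic: in the very next estimate of the paper, $\|\tilde{z}_\omega^{+}\|_{L^p}^2$ sits in the denominator of $Q_\omega(\tilde{z}_\omega^{+})/\|\tilde{z}_\omega^{+}\|_{L^p}^2$, and the bound $c_\omega=O\bigl((m-\omega)^{(6-2p)/(p-2)}\bigr)$ that drives Theorem \ref{mainthm2} requires precisely that $\|\tilde{z}_\omega^{+}\|_{L^p}$ not be too small. So your argument proves Lemma \ref{lem6.2} as literally stated (a big-O upper bound), but if it replaced the paper's proof, the subsequent bifurcation estimate would need a separate matching lower bound on $\|\tilde{z}_\omega^{+}\|_{L^p}$, which your one-sided method cannot supply.
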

\begin{proof}
It is clear that
$$
\begin{aligned}
\left\|\tilde{z}_\omega^{+}\right\|_{L^p}^p & =\int_{\mathbb{R}^3}(m-\omega)^{\frac{3}{2} p} \eta^p((m-\omega) x)\left|\tilde{u}_m^{+}(x)\right|^p d x \\
& =(m-\omega)^{\frac{3(p-2)}{2}} \int_{\mathbb{R}^3} \eta^p(x) \cdot\left|\tilde{u}_m^{+}\left(\frac{x}{m-\omega}\right)\right|^p d x.
\end{aligned}
$$
Note that $\tilde{u}_m^{+}$ is also a periodic function. By a version of Riemann-Lebesgue Lemma, we have
$$
\int_{\mathbb{R}^3} \eta^p(x)\left|\tilde{u}_m^{+}\left(\frac{x}{m-\omega}\right)^p\right| d x \rightarrow M\left(\left|\tilde{u}_m^{+}\right|^p\right) \cdot \int_{\mathbb{R}^3} \eta^p(x) d x \text {. }
$$
Therefore, $\left\|\tilde{z}_\omega^{+}\right\|_{L^p}^p=O\left((m-\omega)^{\frac{3(p-2)}{2}}\right)$.
\end{proof}
For $v \in E^-$, by Lemma \ref{lem2.4}, we have
$$
\begin{aligned}
\Psi\left(v+s\tilde{z}_\omega^{+}\right)  \geq a_1\left(s^p\left\|\tilde{z}_{\omega}^{+}\right\|_{L^p}^p+\|v\|_{L^2}^2\right)  \geq  a_1 s^p\left\|\tilde{z}_{\omega}^{+}\right\|_{L^p}^p .
\end{aligned}
$$
Thus,
$$
\begin{aligned}
\Phi_\omega\left(v+s \tilde{z}_\omega^{+}\right) & =\frac{1}{2} Q_\omega\left(v+s \tilde{z}_\omega^{+}\right)-\Psi\left(v+s z_\omega^{+}\right) \\
& \leq \frac{1}{2} Q_\omega(v)+\frac{s^2}{2} Q_\omega\left(\tilde{z}_\omega^{+}\right)-a_1 s^p\left\|\tilde{z}_\omega^{+}\right\|_{L^p}^p \\
& \leq  \frac{s^2}{2} Q_\omega\left(\tilde{z}_\omega^{+}\right)-a_1 s^p\left\|\tilde{z}_\omega^{+}\right\|_{L^p}^p.
\end{aligned}
$$
Here, we need the following facts 
$$
\begin{aligned}
 Q_\omega(v)=-\|v\|^2-\omega\|v\|_{L^2}^2 \leq  0, \quad Q_\omega\left(\tilde{z}_\omega^{+}\right)=\left\|\tilde{z}_\omega^{+}\right\|^2-\omega\left\|\tilde{z}_\omega^{+}\right\|_{L^2}^2>0 .
\end{aligned}
$$
Therefore, by \eqref{eq6.1} and Lemma \ref{lem6.2}, we have

\begin{align*}
    \sup_{s>0}\left( \frac{s^2}{2} Q_\omega\left(\tilde{z}_\omega^+\right)-a_1s^p\|\tilde{z}_\omega^+\|_{L^p}^p\right) &\leq \frac{1}{a_1^{\frac{2}{p-2}}}\left(\frac{1}{2p^{\frac{2}{p-2}}}-\frac{1}{p^{\frac{p}{p-2}}}\right) \left(\frac{Q_\omega(\tilde{z}_\omega^+)}{\|\tilde{z}_\omega^+\|_{L^p}^2}\right)^{\frac{p}{p-2}}\\
    &=O\left((m-\omega)^{\frac{6-2p}{p-2}}\right),
\end{align*} 
as $\omega \rightarrow m-$.
Thus,
$$
\begin{aligned}
c_\omega: & =\inf _{w \in E^{+}} \sup _{u \in E_\omega} \Phi_\omega(u) \\
& \leq  \sup _{\substack{v\in E^{-} \\
s \geq 0}} \Phi_\omega\left(v+s \tilde{z}_\omega^{+}\right) \\
& \leq C\left((m-\omega)^{\frac{6-2 p}{p-2}}\right) .
\end{aligned}
$$
{\bf Proof of Theorem \ref{mainthm2}.}
 For $\omega \in(-m, m)$, we have shown that $c_\omega$ is attained, let $u_\omega$ be the critical point of $\Phi_\omega$ with $\Phi_\omega\left(u_\omega\right)=c_\omega$.
Thus,
$$
\begin{aligned}
c_\omega & =\Phi_\omega\left(u_\omega\right)-\frac{1}{2}\left\langle\Phi_\omega^{\prime}\left(u_\omega\right), u_\omega\right\rangle \\
& =\frac{1}{2}\left\langle\Psi^{\prime}\left(u_\omega\right), u_\omega\right\rangle-\Psi\left(u_\omega\right) \\
& \geq \frac{b_1}{2}\left\|u_\omega^{+}\right\|_{L^p}^p .
\end{aligned}
$$
This implies that $$\left\|u_\omega^{+}\right\|_{L^p} \leq  C  (m-\omega)^{\frac{6-2 p}{p(p-2)}},$$ as $\omega \rightarrow m-$. Since $$\left\langle\Phi_\omega^{\prime}\left(u_\omega\right), u_\omega^{+}-u_{\omega}^{-}\right\rangle=\left\|u_\omega\right\|^2-\omega\left(|| u_\omega^{+}\left\|_{L^2}^2-\right\| u_\omega^{-} \|_{L^2}^2\right)-\left\langle\Psi^{\prime}\left(u_\omega\right), u_{\omega}^{+}-u_\omega^{-}\right\rangle,$$ 
combined with 
$$
\left\|u_\omega\right\|^2=\omega\left(\|u_\omega^{+}\left\|_{L^2}^2-\right\| u_\omega^{-} \|_{L^2}^2\right)+\left\langle\Psi^{\prime}\left(u_\omega\right), u_\omega^{+}-u_\omega^{-}\right\rangle,
$$
we get
$$
\begin{aligned}
\frac{m-\omega}{m}\left\|u_\omega\right\|^2 &\leq \| u_\omega \|^2-\omega\left(\|u_\omega^{+}\|_{L^2} ^2-\left\|u_\omega^{-}\right\|_{L^2}^2\right) \\
& =\left\langle\Psi^{\prime}\left(u_\omega\right), u_\omega^{+}-u_\omega^{-}\right\rangle \\
& \leq C\left(\left\|u_\omega^{+}\right\|_{L^p}^{p}-\left\|u_\omega^{-}\right\|_{L^2}^2\right) \\
& \leq C\left\|u_\omega^{+}\right\|_{L^p}^{p}  \\
& \leq C  (m-\omega)^{\frac{6-2 p}{p-2}}.\\
&
\end{aligned}
$$
Therefore, we have  $$\left\|u_\omega\right\| \leq C  (m-\omega)^{\frac{8-3 p}{2(p-2)}} \rightarrow 0,$$ as $\omega \rightarrow m-$ when $2<p<8/3$.

\small {\bf \small  Declarations of interest}: none.

{\bf \small Data availability statement:} There are no new data associated with this article.

%\subsection*{Declaration of Generative AI and AI-assisted technologies in the writing process}

\bibliographystyle{plain}
\bibliography{Dirac}

 \newpage
\noindent {Qi Guo\\
School of Mathematics,\\
Renmin University of China, Beijing, 100872, P.R. China\\
e-mail: qguo@ruc.edu.cn}
\medskip
\\
\noindent{Yuanyuan Ke\\
School of Mathematics,\\
Renmin University of China, Beijing, 100872, P.R. China\\
e-mail: keyy@ruc.edu.cn}
\medskip
\\
\noindent{Bernhard Ruf\\
Accademia di Scienze e Lettere,\\
Istituto Lombardo, Milano, 20133, Italy\\
e-mail: bruff001@gmail.com}

\end{document}